\providecommand{\Z}{}
\providecommand{\N}{}
\providecommand{\T}{}
\renewcommand{\Z}{\mathbb{Z}}
\renewcommand{\N}{{\mathbb N}}
\renewcommand{\T}{\mathbb T}
\newcommand{\E}[1]{{\mathbb E}\left[#1\right]}						
\newcommand{\prob}{\mathbb{P}}
\newcommand{\p}[1]{{\mathbb P}\left(#1\right)}
\newcommand{\I}[1]{{\mathbf 1}_{\left[#1\right]}}
\newcommand{\Inb}[1]{{\mathbf 1}_{#1}}
\DeclareMathSymbol{\leqslant}{\mathalpha}{AMSa}{"36} 
\DeclareMathSymbol{\geqslant}{\mathalpha}{AMSa}{"3E} 
\DeclareMathSymbol{\eset}{\mathalpha}{AMSb}{"3F}     
\renewcommand{\leq}{\;\leqslant\;}                   
\renewcommand{\geq}{\;\geqslant\;}                   
\newcommand{\heap}[2]{\genfrac{}{}{0pt}{}{#1}{#2}}
\newcommand{\cE}{\mathcal{E}}
\newcommand{\eps}{\varepsilon }
\renewcommand{\P}{\mathbb{P}}
\newcommand{\bx}{\mathbf{x}}
\newcommand{\hn}{^{_{(n)}}}
\newcommand{\hnn}{^{(n)}}
\newtheorem{theorem}{Theorem}
\newtheorem{lemma}[theorem]{Lemma}
\newtheorem{remark}[theorem]{Remark}
\newtheorem{corollary}[theorem]{Corollary}
\newtheorem{claim}[theorem]{Claim}
\DeclareSymbolFont{extraup}{U}{zavm}{m}{n}
\DeclareMathSymbol{\varheart}{\mathalpha}{extraup}{86}
\DeclareMathSymbol{\vardiamond}{\mathalpha}{extraup}{87}
\newcommand{\invisible}[1]{}
\begin{document}

\title[The fewest-big-jumps principle and an application to random graphs]{
The fewest-big-jumps principle and an application to random graphs} 

\
\author[Kerriou]{C\'eline Kerriou}
\address{Universit\"at zu K\"oln, Department of Mathematics and Computer Science, Weyertal 86-90, 50931 Cologne, Germany}
\email{ckerriou@math.uni-koeln.de}
\author[Mörters]{Peter M\"orters}
\address{Universit\"at zu K\"oln, Department of Mathematics and Computer Science, Weyertal 86-90, 50931 Cologne, Germany}
\email{moerters@math.uni-koeln.de}
\address{}
\email{}


\keywords{Condensation, geometric random graphs, heavy-tailed random variables, large deviations, truncated random variables}
\ \\[-15mm]\begin{abstract} 
   We prove a large deviation principle for the sum of $n$ independent heavy-tailed random 
variables, which are subject to a moving 
{truncation}
at location $n$. Conditional 
on the sum being large at scale $n$, we show that a finite number of summands take values near the {truncation} boundary, while the remaining variables still obey the law of large numbers. This generalises the well-known
single-big-jump principle for random variables without {truncation}
to a situation where just the minimal necessary number of jumps occur. 
As an application, we consider a random graph with vertex set given by the lattice points of a torus with sidelength $2N+1$. Every vertex is the centre of a ball with random radius sampled from a heavy-tailed distribution. Oriented edges are drawn from the central vertex to all other vertices in this ball. When this  graph is conditioned on having an exceptionally  large number of edges we use our main result to show that, as $N\to\infty$, the excess outdegrees condense in a fixed, finite number of randomly scattered vertices of macroscopic outdegree. By contrast, no condensation occurs for the indegrees of the vertices, which all remain microscopic in size.
\end{abstract}

\maketitle
\ \\[-18mm]

\section{Introduction}
There are two principal paradigms for the upper large deviations of the sum of independent (or mildly dependent) and identically distributed random variables. These are on the one hand the 
\emph{paradigm of collaboration} whereby all variables contribute equally to the sum by a change of measure applied to all independent terms. This large deviation strategy typically prevails if the terms are light tailed at infinity and the  cost of individual large terms would be overwhelming. On the other hand there is the \emph{paradigm of the single big jump}, see for example~\cite{10.2307/25450633, ASMUSSEN1996103, foss2005principle, PhysRevE.100.012108, NWZ2022}, by which a single summand provides a sufficiently large value so that it suffices that all other terms show typical behaviour. In statistical mechanics this paradigm is related to spontaneous symmetry breaking, i.e. a single term is chosen at random to provide the large jump, and condensation, i.e.\ a single site provides a macroscopic share of the total mass. This large deviation strategy typically prevails in the case of sums of heavy-tailed random variables.
\smallskip

In this paper we look at a situation where the random variables are heavy-tailed but there is a {truncation} at a variable threshold. In this case a single big jump may not suffice, but rather a finite number of summands are required to provide the large deviation. We  prove a precise large deviation result showing that the optimal strategy in this case is to pick the minimal number of sites necessary for the large deviation and share the excess mass among these sites. This result, Theorem~\ref{thm:main_theorem},  underpins the novel \emph{paradigm of the minimal number of jumps}, or fewest-big-jumps principle at the heart of this paper. 
\smallskip

Our general result is motivated by a large deviation principle for the number of edges in  a finite geometric random graph model as the number of vertices goes to infinity, see Theorem~\ref{thm:graph_application}. This model has oriented edges, so that there are two ways to represent the number of edges in the graph as the sum of identically distributed random variables: either as the sum of the indegrees  or as the sum of the outdegrees over all vertices in the graph. We show that from the former point of view we see the paradigm of collaboration, but from the latter we see the paradigm of the minimal number of jumps. Our proof of this insightful result uses Theorem~\ref{thm:main_theorem} applied to the outdegrees. We conclude the paper with some comments on further large deviation results for the number of edges in geometric random graphs.
\pagebreak[3]

\section{The `fewest-big-jumps' principle}

For our main result we consider a triangular scheme of nonnegative random variables
 where the random variables $(W_k^{_{(n)}} \colon 1\leq k\leq n)$ in the $n$th row are
independent and identically distributed to some $W^{{(n)}}$ for which we make the following assumptions:

\begin{enumerate}[wide, label=\textbf{Assumption~\arabic*.},ref=\arabic*,leftmargin=*, labelindent=0pt, labelwidth=!]
    \item \label{eq:expectation_conv_assumption}
    We have $0\leq W^{{(n)}}\leq n$ and there exists $\mu\geq 0$ such that $$\mu_n:=\mathbb E W^{{(n)}} \to \mu.$$
    \item \label{eq:distr_assumption}  There exist $\alpha>1$, a regularly varying sequence $(f(n))_n$ with index $-\alpha$,
    a measure $\nu$ on $(0,1]$, which is finite on intervals bounded away from zero and absolutely continuous on $(0,1)$ with a continuous
    density $h\colon (0,1) \to [0,\infty)$, and a sequence $(\eta_n)_n$ with
    $\eta_n \downarrow0$ such that, for all $\eps>0$ whenever $\eps \leq a_n< b_n$ with $b_n \leq 1-\eta_n$ or $b_n=1$  and with
    $\eta_n\leq b_n-a_n\to0$, we have
    \begin{equation*}
        \prob(a_n n <  W^{{(n)}} \leq b_n n) =  ( 1  +o^*(1)) \, f(n)\, \nu(a_n,b_n],
    \end{equation*}
    where $o^*(1)$ is a function going to zero such that, for all fixed $\eps>0$ and $\Delta_n\downarrow0$, the convergence is
    uniform over all eligible choices of $a_n, b_n$ with $b_n-a_n\leq\Delta_n$.
\end{enumerate}

From Assumption~\ref{eq:distr_assumption} we obtain, for every $0<\eps<1$, that
\begin{equation}\label{eq:lp_bound_assumption}
        \prob(W^{{(n)}}>\eps n ) =  ( 1  +o(1)) \, f(n)\, \nu(\eps,1].
    \end{equation}
This follows by partitioning $(\eps,1]$ into 
exactly $(1-\eps)/\Delta_n$ disjoint intervals of equal length $\Delta_n$ chosen such that $\eta_n\leq \Delta_n\to0$, and applying Assumption~\ref{eq:distr_assumption} to these intervals.
\smallskip

\begin{remark}
Assumption~\ref{eq:distr_assumption} roughly says that, for every $0<\eps<1$, conditionally on $W^{{(n)}}> \eps n$, the random variable $W^{{(n)}}/{n}$ converges in a certain strong sense to the distribution $\nu|_{(\eps,1]}/\nu((\eps,1])$. The measure $\nu$ therefore describes the shape of the truncation on the scale~$n$.
We allow $\nu$ to have an atom at one in order to cover examples with a hard cut-off, while a continuous density of $\nu$ is needed elsewhere to allow the explicit description of constants in our results. The sequence $(\eta_n)_n$ is introduced to weaken the assumption with the aim to cover more examples, in particular in the context of random graphs.\end{remark}

\noindent
{\bf Examples:}
Our examples are based on a random variable $W\geq 0$ with regularly varying tail function $G(x):=\mathbb P(W>x)$. More precisely, we assume that $G(x)=x^{-\alpha} \ell(x)$ with
$\alpha>1$ and the slowly varying part $\ell$ 
satisfies, for all $0<\eps \leq a_n< b_n$ with $b_n-a_n\to0$, 
\begin{align}\label{eq:slowly_var_condition}
    \frac{\ell(b_n n)-\ell(a_n n)}{\ell(n) \,\frac{b_n-a_n}{b_n}} \to 0  \text{ as  } n\to\infty,
\end{align}
where for all fixed $\eps>0$ and $\Delta_n\downarrow0$, the convergence is uniform over all choices of $\eps\le a_n< b_n$ with $b_n-a_n\leq\Delta_n$.
This condition holds, e.g., if $\ell$ is a constant,  logarithm, or the reciprocal or iterate of a logarithm.
\begin{enumerate}
[wide, label={\bf(\alph*)}, ref=\alph*,leftmargin=*, labelindent=0pt, labelwidth=!]
    \item\label{ex:a} The random variables $W^{{(n)}}:=W\wedge n$ satisfy Assumption~\ref{eq:expectation_conv_assumption}. For all $n$ with $b_n<1$ we have
    \begin{align*}
         \mathbb P(a_n n & < W \wedge n \leq b_n n) = \p{W > a_n n} - \p{W> b_n n} 
         = G(a_n n) - G(b_n n)\\
         & = n^{-\alpha} \big( (a_n^{-\alpha} - b_n^{-\alpha}) \ell(a_n n)+ (b_n^{-\alpha} (\ell(a_n n)-\ell(b_n n) )  \big) \\
         & = n^{-\alpha} \big( (a_n^{-\alpha} - b_n^{-\alpha}) (1+o^*(1))  \ell(n)  + o^*(1) \ell(n) b_n^{-\alpha-1}(b_n-a_n)   \big) \\
        & = (1+o^*(1)) n^{-\alpha} \ell(n) \, (a_n^{-\alpha} - b_n^{-\alpha}),
    \end{align*}
    and for all $n$ with $b_n=1$ that
\begin{align*}
         \mathbb P(a_n n  < W \wedge n \leq b_n n) = \p{W > a_n n} 
         = G(a_n n)  = n^{-\alpha} a_n^{-\alpha} \ell(a_n n) 
        = (1+o^*(1)) n^{-\alpha} \ell(n).
 \end{align*}    
Hence Assumption~\ref{eq:distr_assumption} holds with $f=G$, arbitrary $\eta_n\downarrow0$, and the measure $\nu$ having density $\alpha x^{-\alpha-1}$
    on $(0,1)$ and an atom of size one at one.\smallskip
     \item\label{ex:b} Let $\phi_n\colon (0,\infty)\to(0,n)$ be a family of increasing, surjective functions.  Suppose $(\phi_n(W))_n$ is $L^p$-bounded for $1\leq p<\alpha$, $\E{ \phi_n(W)}$ converges and, for some continuously differentiable, 
     increasing  function 
     $g\colon (0,1) \to (0,\infty)$ with \mbox{$g(a)\to\infty$} as $a\uparrow1$, 
     and for some $\eta_n\downarrow0$ and all $0<\eps \leq a_n< b_n \leq 1$ with    $b_n-a_n\geq {\eta_n}$
     we have
\begin{equation}\label{koc}
\sup_{a\in(0,1)} \Big|\frac{\phi^{-1}_n(a n)}n-g(a)\Big| \leq  \eta_n^2
\text{ and } \frac{\ell(\phi^{-1}_n(a_n n))}{\ell(n)}=1+o^*(1).
\end{equation}
Note that by \cite[Theorem~1.2.1]{bingham_goldie_teugels_1987} the second condition in \eqref{koc} need
only be checked for $a_n\to 1$ and ${1-a_n}\geq{\eta_n}$.
     We have for $b_n<1$ that
    \begin{align*}
        \mathbb P(a_n n & < \phi_n(W)\leq b_n n)  = \p{W > \phi_n^{-1}(a_n n)} - \p{W>\phi_n^{-1}(b_n n)} \\
        & = \ell(\phi_n^{-1}(a_n n))  \big( (\phi_n^{-1}(a_n n))^{-\alpha}
        - (\phi_n^{-1}(b_n n))^{-\alpha}\big) \\
& \qquad          +  (\phi_n^{-1}(b_n n))^{-\alpha} \big( \ell(\phi_n^{-1}(a_n n)) - \ell(\phi_n^{-1}(b_n n)) \big) \\
         & = (1+o^*(1)) {\ell(\phi_n^{-1}(a_n n))}   \big( (ng(a_n))^{-\alpha}
        - (ng(b_n))^{-\alpha}\big)  \\
& \qquad          +  o^*(1) n^{-\alpha} g(b_n)^{-\alpha-1}   \big( \ell(n) { (g(a_n) - g(b_n))} \big)  \\
        & = (1+o^*(1))  \ell(n)  n^{-\alpha}
        \, (g(a_n)^{-\alpha}-g(b_n)^{-\alpha}),
    \end{align*}
    and, in the case $b_n=1$,
\begin{align*}
         \mathbb P(a_n n < \phi_n(W)) & = \p{W > \phi_n^{-1}(a_n n)} 
      = (\phi_n^{-1}(a_n n))^{-\alpha} 
     {\ell(\phi_n^{-1}(a_n n))}  \\
        & = (1+o^*(1)) n^{-\alpha} \ell(n)  
        \,  g(a_n)^{-\alpha}.
    \end{align*}
    Hence $W^{{(n)}}:=\phi_n(W)$ satisfies our assumptions 
    with $f=G$ and $\nu$ absolutely continuous
    with density \smash{$h(x) = \alpha g(x)^{-\alpha-1}g'(x)$}. 
    \smallskip

    A possible choice of such a {truncation} family would be \smash{$\phi_n(x)=n(1-e^{-x/n})$}, in which case \smash{$\phi_n^{-1}(x) = -n\log(1-x/n)$}
    and $g(x)=-\log(1-x)$. Then the measure $\nu$ is absolutely continuous with density
    $h(x)=\alpha (1-x)^{-1}\log(\frac1{1-x})^{-\alpha-1}$. Given $\ell$ the second condition in \eqref{koc}
    becomes a condition on the sequence~$(\eta_n)_n$.\smallskip
    \item\label{ex:b2}
    The random variables $W^{{(n)}}$ defined as
    $W$ conditioned on the event $\{W\leq n\}$ satisfy  Assumptions~\ref{eq:expectation_conv_assumption}. Moreover, for all $0<\eps \leq a_n< b_n \leq 1$,  we have 
    $$\prob(a_n n <  W^{{(n)}} \leq b_n n) = \frac{G(a_n n)-G(b_n n)}{1-G(n)} =(1+o^*(1)) G(n) (a_n^{-\alpha}- b_n^{-\alpha}),$$ so that Assumption~\ref{eq:distr_assumption} holds with $f=G$, an arbitrary sequence  $\eta_n\downarrow0$, and $\nu$ absolutely continuous  
    with density
    $h(x)=\alpha x^{-\alpha-1}$.\pagebreak[3]
    
    \item\label{ex:c}  Let $W^{{(n)}}$ be the number of points in
    $B(0,W^{\frac1d}) \cap [-N,N]^d \cap \Z^d$ where 
    $n=(2N+1)^d$. Then we show in Section~\ref{sec:randomgraphmodel} that
     $W^{{(n)}}$ satisfies our assumptions.
\end{enumerate}
 
\begin{theorem}\label{thm:main_theorem}
Suppose that $W_1^{_{(n)}},\ldots, W_n^{_{(n)}}$ are independent and identically distributed random variables satisfying Assumptions~\ref{eq:expectation_conv_assumption} and~\ref{eq:distr_assumption}, and let $$S_n=W_1^{_{(n)}}+\ldots +W_n^{_{(n)}}.$$
Let $\rho>0$ be non-integer and $k = k(\rho) \in\mathbb N$  
such that
$$k-1<\rho<k.$$
There exists $\delta_n\downarrow0$ such that, for any 
$\rho_1(n)\uparrow\rho$ and $\rho_2(n)\downarrow \rho$ with
$\rho_2(n)-\rho_1(n)\gg \delta_n$, we have
$$\prob\big(n(\rho_1(n)+\mu)\leq S_n \leq n(\rho_2(n)+\mu)\big) =
(K_{\rho}+o(1)) {n \choose k} (\rho_2(n)-\rho_1(n))  f(n)^{k},$$
where  $K_{\rho} := h(\rho)$ if $0<\rho<1$ and for $\rho>1$,
$$K_{\rho}:=
\int \nu(dx_1)\cdots 
         \int \nu(dx_{k-1}) \, \frac{k}{k-\sum_{i=1}^{k-1}1_{x_i=1}} \, 
\, h\Big(\rho-\sum_{i=1}^{k-1}x_i\Big),$$
where $h$ is extended by zero to  the entire real line 
and $K_\rho$ is finite.
\end{theorem}

\begin{remark}
\label{tail}
By a variation of the argument we also obtain that, under the assumptions of the theorem, we have for \smash{$K'_{\rho}:=
\int \nu(dx_1)\cdots 
         \int \nu(dx_{k})
         \mathbf 1_{[\sum_{i=1}^k x_i \geq \rho]}$}, that 
$\prob(S_n \geq n(\rho+\mu)) =
(K'_{\rho}+o(1)) {n \choose k}   f(n)^{k}$, see Section~\ref{sec:proof_main_theorem} for a sketch of the modifications needed in the proof.
\end{remark}
\begin{remark}\label{integer}
Observe that the order in $n$ of the probability of the large deviation event in Theorem~\ref{thm:main_theorem} and  Remark~\ref{tail} changes discontinuously at integer values of $\rho$. If $\rho$ itself is an integer even the order of the tail probability depends on the exact distribution of the summands in $S_n$ through the convergence rates in the law of large numbers and the tail at $n$. To illustrate this dependence consider
$W^{(n)}=\phi_n(W)$ as in our example $(b)$ with $G(x)=x^{-\alpha}$ for $x\ge1$. In the case of the cut-off function
$\phi_n(x)=x \wedge n$ the tail probability 
is of order 
$n^{-(\alpha+1)k}$ because 
to achieve $S_n \geq n(k+\mu)$
it suffices to raise the value of $W$ for $k$ instances to $n$ or larger and have the rest following the law of large numbers, 
while in the case $\phi_n(x)=\frac{nx}{n+x}$
this does not suffice and hence the tail probability is of strictly smaller order than~$n^{-(\alpha+1)k}$.
\end{remark}
The proof of Theorem~\ref{thm:main_theorem}, given in Section~\ref{sec:proof_main_theorem}, also reveals explicitly how the summands achieve
the large deviation of the sum $S_n$, namely by making exactly the minimal number $k$ of jumps necessary to increase the sum by $\rho n$. The following corollary makes this observation precise. For the rest of the paper, define for $\rho_1(n), \rho_2(n)$ as in Theorem~\ref{thm:main_theorem},
\begin{align}\label{def:interval_in}
    I_n := I_n(\rho_1(n), \rho_2(n)) := [n(\rho_1(n) + \mu), n(\rho_2(n) + \mu)].
\end{align}
\begin{corollary}\label{cor:dominating_event} 
There exists $\gamma_n
\downarrow 0$ such that for  sufficiently small $\eps>0$ we have that
\begin{align*}
 \prob\Big(  & \text{ there exist exactly } k \text{  indices } 1\leq i_1,\ldots ,i_k \leq n
    \text{ such that}\\
    & \phantom{lu} W_{i_1}\hn> \eps  n,\ldots , W_{i_k}\hn > \eps  n
    \text{ and } \Big|\sum_{j=1}^k W_{i_j}\hn   - \rho n\Big| \leq \gamma_n n \text{ and }\\
     & \phantom{lu} W_{j}\hn \leq \eps  n \, \forall \,  j\not=i_1,\ldots,i_k
    \text{ and } \Big|\sum_{\heap{j=1}{j\not=i_1,\ldots,i_k}}^n W_j\hn   - \mu n\Big| \leq \gamma_n n \, 
    \Big| \, S_n \in I_n \Big) \rightarrow 1,
\end{align*}
as $n\rightarrow \infty$. 
\end{corollary}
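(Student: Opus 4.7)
The plan is to show that $\prob(\cA_n^c \cap \{S_n \in I_n\}) = o(\prob(S_n \in I_n))$, where $\cA_n$ denotes the event appearing inside the conditional probability of the corollary and the denominator is read off from Theorem~\ref{thm:main_theorem}. Introducing the number of big jumps $N_n^\eps := |\{i \in [n] : W_i^{(n)} > \eps n\}|$, we split $\cA_n^c \cap \{S_n \in I_n\}$ into three cases: (A) too few big jumps, $N_n^\eps \leq k-1$; (B) too many big jumps, $N_n^\eps \geq k+1$; and (C) exactly $k$ big jumps but at least one of the two sum constraints in $\cA_n$ violated. Theorem~\ref{thm:main_theorem} provides the asymptotics $\prob(S_n \in I_n) \sim C_\rho (\rho_2(n) - \rho_1(n)) n^{k(1-\alpha)}$, which each of the three contributions must beat.

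Case (B) is immediate: by assumption~(iii), $\prob(W^{(n)} > \eps n) = O(n^{-\alpha})$, so a union bound gives $\prob(N_n^\eps \geq k+1) = O(n^{(k+1)(1-\alpha)})$, which is $o\bigl((\rho_2(n) - \rho_1(n)) n^{k(1-\alpha)}\bigr)$ provided $\rho_2(n) - \rho_1(n) \gg n^{-(\alpha-1)}$, a condition we encode into the choice of $\gamma_n$. For case (C), condition on the $k$ indices carrying big jumps; the remaining $n-k$ variables are conditionally iid copies of $W^{(n)} \mid W^{(n)} \leq \eps n$, with mean converging to $\mu$ (since $\E{W^{(n)}; W^{(n)} > \eps n} = O(n^{1-\alpha}) \to 0$ by assumptions (ii) and (iii)). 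A quantitative weak law of large numbers, obtained via Bennett's inequality, gives that the sum of these $n-k$ small variables lies within $\tfrac{1}{2}\gamma_n n$ of $\mu n$ with conditional probability tending to one; combined with $S_n \in I_n$ this forces the sum of the $k$ big jumps to lie within $\gamma_n n$ of $\rho n$, rendering case (C) negligible as well.

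Case (A) is the technical heart. If $N_n^\eps = j \leq k-1$, then the $j$ big jumps contribute at most $jn \leq (k-1)n$, so $S_n \in I_n$ forces the sum of the $n-j$ truncated variables to exceed its mean by at least $(\rho - k + 1 - o(1))n$, a macroscopic deviation since $\rho > k-1$. Applying Bennett's inequality to variables bounded by $\eps n$, with variance of order $(\eps n)^{2-\alpha}$ for $\alpha \in (1,2)$ and of order $O(1)$ for $\alpha > 2$ (both controlled through assumption (iii)), yields a tail bound of the form $n^{-C(\alpha)/\eps}$, where the logarithmic improvement of Bennett over Bernstein is essential. Multiplying by $\binom{n}{j} \prob(W^{(n)} > \eps n)^j = O(n^{j(1-\alpha)})$ and summing over $j = 0, \ldots, k-1$ gives total contribution $O(n^{-C(\alpha)/\eps})$ in the worst case $j=0$, which is $o(n^{k(1-\alpha)})$ as soon as $\eps$ is taken small enough that $C(\alpha)/\eps > k(\alpha - 1)$. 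The main obstacle is precisely achieving this concentration bound uniformly in the regime $\alpha \in (1, 2)$, where the variance of the truncated variable is already of polynomial order in $n$; it is the logarithmic factor in Bennett that makes the exponent of the resulting polynomial tail grow without bound as $\eps \downarrow 0$ and thereby closes the proof.
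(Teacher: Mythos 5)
Your decomposition of $\cA_n^c\cap\{S_n\in I_n\}$ into (A) too few big jumps, (B) too many, and (C) exactly $k$ with the wrong split is the same one the paper uses (its events $\cE_4^j$, $\cE_3$, $\cE_2$), and the overall strategy is the same: show each piece is $o\bigl((\rho_2(n)-\rho_1(n))\binom{n}{k}n^{-\alpha k}\bigr)$ so that the dominating event $\cE_1$ carries all the conditional mass. The main substantive difference is the concentration tool: you invoke Bennett's inequality for the truncated variables, whereas the paper derives the exponential Chebyshev bound by hand in the proof of Claim~\ref{claim:bound_e4j} (with $s=a\log n/n$, a truncation at $c/s$, and a separate Markov estimate for the tail) and uses the Feller-type weak law in Lemma~\ref{lem:lln_wn} for the small-variable sum. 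These are morally equivalent, and you are right that the logarithmic factor in the exponent (Bennett over Bernstein) is exactly what lets the polynomial rate $n^{-C/\eps}$ grow without bound as $\eps\downarrow0$, which is the key point for $\alpha\in(1,2)$; the paper's $s=a\log n/n$ plays precisely this role.

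Two places where you should be more careful. First, the threshold on $\eps$ in case (A) is not quite ``$C(\alpha)/\eps>k(\alpha-1)$''. Because the target $\P(S_n\in I_n)\asymp(\rho_2(n)-\rho_1(n))\,n^{k(1-\alpha)}$ carries the factor $\rho_2(n)-\rho_1(n)$, which may decay as slowly as just above $n^{1-\alpha}$, you actually need the Bennett exponent at sub-case $j$ to beat $(k-j+1)(\alpha-1)$, and the exponent itself is proportional to $\rho-j$ (the size of the forced macroscopic deviation of the small sum). The binding sub-case is $j=k-1$, giving roughly $\eps<(\rho-k+1)/2$ after optimizing, not a uniform $k(\alpha-1)$ threshold; this is why the paper fixes $0<\eps<(\rho-(k-1))(k+2/(\alpha-1))^{-1}$ from the outset. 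Second, in case (C) the statement that the small sum concentrates ``with conditional probability tending to one'' is not sufficiently quantitative: what is needed is $\P(|T_{\text{small}}-\mu n|>\gamma_n n/2)=o(\rho_2(n)-\rho_1(n))$, where $\rho_2(n)-\rho_1(n)\to0$. This does hold, but only after the choice of $\gamma_n$ is made compatible with a quantitative weak law; the paper's Lemma~\ref{lem:lln_wn} supplies exactly the rate $\zeta_n\ll\delta_n\ll\rho_2(n)-\rho_1(n)$ that makes this automatic. If you use Bennett instead you should note that this may force $\gamma_n$ to decay more slowly than $\rho_2(n)-\rho_1(n)$ (which is allowed, since the corollary only asserts $\gamma_n\downarrow0$), and spell out the resulting bound rather than appealing to a qualitative LLN. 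With these two points tightened up, the argument is sound and matches the paper's in essence.
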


Before moving to an application 
to random geometric graphs, we comment on the possible size 
of  the sequence $(\delta_n)_n$ in Theorem~\ref{thm:main_theorem}.

\begin{remark}In Section~\ref{sec:proof_main_theorem} we show that there exists 
$\zeta_n\downarrow 0$ such that $S_n:=W_1^{_{(n)}}+\cdots +W_n^{_{(n)}}$
satisfies
\begin{equation}
\prob\big(| S_n-n \mu| > \zeta_n n\big) \leq \zeta_n.
\label{eq:lln_wn}\end{equation}
Under suitable assumptions, explicit values for $(\zeta_n)_n$ can be derived from central or stable limit theorems for  triangular schemes.
The sequence $(\delta_n)_n$ in Theorem~\ref{thm:main_theorem} can then be chosen as $\delta_n\gg \zeta_n \vee \eta_n$ with $(\eta_n)_n$ as in  Assumption~\ref{eq:distr_assumption} and $(\zeta_n)_n$ as in~\eqref{eq:lln_wn}. We additionally require $\zeta_n \gg n^{-1}$ and $\zeta_n \gg n f(n)$ for $n$~large, which is typically weaker than the requirement of~\eqref{eq:lln_wn}. Hence $(\delta_n)_n$ can be chosen independent of $\rho$, but note that the convergence in Theorem~\ref{thm:main_theorem} is not uniform over $\rho\in(k-1,k)$.
\end{remark}


\section{The random graph model}\label{sec:randomgraphmodel}

We now look at a large deviation problem for the number of edges in
a geometric random graph model $G = (V,E)$. The vertices of this graph are the 
$n=(2N+1)^d$ lattice points on $\T_N:= [-N,N]^d$ equipped with 
the torus metric~$D$.
Every vertex $v\in V$ is equipped with a random radius $R_v\geq0$ where $(R_v \colon v\in V)$ are i.i.d.\ random variables with regularly varying tail function $G(x):=\mathbb P(R_v>x)$. We assume that $G(x)=x^{-\beta} \ell(x)$ with
$\beta>d$ and the slowly varying part $\ell$ 
satisfies condition \eqref{eq:slowly_var_condition}.
We draw an oriented edge from each vertex $v$ to all vertices $w\in
B(v,R_v)\setminus\{v\}$, where $B(v,r):=\{ w\in \T_N \colon D(v,w)<r\}$ is the open ball centred in~$v$ of radius~$r$. \smallskip

We denote by
$\mathsf{out}(v)$ the out-, and by $\mathsf{in}(v)$ the in-degree of a vertex $v$. Define~by
$$\varrho(n):= \frac1n\sum_{v\in \T_N} \mathsf{out}(v)
= \frac1n\sum_{v\in \T_N} \mathsf{in}(v)$$
the edge density and by
$\displaystyle\mu:=\lim_{n\to\infty} \mathbb E \varrho(n)$ the asymptotic mean edge density.

The following theorem is our main application of Theorem~\ref{thm:main_theorem}. It shows that when the graph $G$ is conditioned to have an edge density larger than $\mu$, the excess outdegrees condense in a fixed, finite number of randomly scattered vertices whereas the indegrees all remain microscopic in size.
\begin{theorem}\label{thm:graph_application}
Let $\rho>0$ be non-integer and $k\in\N$ such that $k-1<\rho<k$.
There exist $\delta_n\downarrow0$ and a constant $K_\rho>0$ such that,  
for any sequences
$\rho_1(n)\uparrow\rho$ and $\rho_2(n)\downarrow \rho$ with
$\rho_2(n)-\rho_1(n)\gg \delta_n$, we have 
$$\prob\big( \mu+\rho_1(n) \leq \varrho(n) \leq \mu+\rho_2(n)\big)
= (K_{\rho}+o(1)) {n \choose k} (\rho_2(n)-\rho_1(n))  \, G(n)^{k/d}.$$
Given the event $\{\mu+\rho_1(n) \leq \varrho(n) \leq \mu+\rho_2(n)\}$ we have that
\begin{enumerate}[label=(\alph*), ref=\alph*]
    \item \label{eq:cond_total_outdeg}
    There exists a set $\mathscr{V}$ of exactly $k$ vertices $v_1,\ldots, v_k\in\T_N$ such that $$\displaystyle
    \lim_{n\to\infty} \frac{1}{n}  \sum_{i=1}^k \mathsf{out}(v_i) = \rho  \qquad \mbox{ in probability. }$$
 \item \label{eq:cond_outdeg} 
 Every $v\in\mathscr V$ has a macroscopic share of the total {\bf outdegree},~i.e. there exists $\eps>0$
 such that 
    $$\min_{v\in \mathscr V} \frac{\mathsf{out}(v)}{n} \geq \eps  \qquad \mbox{ with high probability,}$$
whereas every $v\not\in\mathscr V$ makes only a microscopic contribution, i.e. $$\displaystyle \lim_{n\to\infty} \max_{v\not\in\mathscr V} \frac{\mathsf{out}(v)}n = 0 \qquad \mbox{ in probability.}$$
    \item \label{eq:no_cond_indeg}No vertex makes a macroscopic contribution to the total {\bf indegree}, i.e. $$\displaystyle \lim_{n\to\infty} \max_{v\in V} \frac{\mathsf{in}(v)}n= 0\qquad \mbox{ in probability.}$$
\end{enumerate}
\end{theorem}

\begin{proof} For every vertex $v\in V$, define $W\hn_v$ to be the outdegree of $v$, that is, $W\hn_v$ is the number of lattice points in 
{$(B(v,R_v)\setminus\{v\})\cap \T_N$}. 
Then $(W\hn_v: v\in V)$ are independent, identically distributed and $\varrho(n) = \sum_{v\in V} W\hn_v /n$. We claim that $W\hnn_v$ satisfies Assumptions~\ref{eq:expectation_conv_assumption} and ~\ref{eq:distr_assumption} 
with $\alpha=\beta/d$ and $f(n)=G(n)^{1/d}$. The first equality of Theorem~\ref{thm:graph_application} then follows directly by Theorem~\ref{thm:main_theorem}, and (\ref{eq:cond_total_outdeg}) and (\ref{eq:cond_outdeg}) follow from Corollary~\ref{cor:dominating_event}. To see why  (\ref{eq:no_cond_indeg}) holds, assume that the indegree of some vertex $v\in V$ is of order $n$. Then $v$ is contained in at least order $n^{(d-1)/d}$ balls of volume of order $n$. In other words, there are at least order $n^{(d-1)/d}$ vertices with outdegree of order $n$, which 
contradicts~(\ref{eq:cond_outdeg}).

Assumption~\ref{eq:expectation_conv_assumption} follows from the fact that $\smash{(W\hnn)_n}$ is increasing to
the number of lattice points in {$B(0,R_0)\setminus\{0\}$}.
We are  left to show that $W\hnn$ satisfies Assumption~\ref{eq:distr_assumption} with $\alpha=\beta/d$ and $f(n)=G(n)^{1/d}$. We do so by bounding $W\hnn$ {from below, resp.\ above,} by the number of unit hypercubes (centred in lattice points) that are contained in, 
resp.~intersect, $(B(0,R_0)\setminus\{0\})\cap \T_N$. Define the function $g: [0,\infty) \rightarrow [0,1]$ by 
$g(r) := \text{Vol}(B(0, \tfrac12\sqrt{d}r) \cap \left[-1/2, 1/2\right]^d).$
Then $g(r) = 1$ for $r\geq 1$. Note that $g'(r)$ corresponds to the {$(d-1)$-dimensional Hausdorff measure of the sphere $\partial B(0,\frac12 \sqrt{d}r)$ intersected with the set $[-1/2, 1/2]^d$}. Therefore $g$ is strictly increasing and continuously differentiable on $(0, 1)$ and we have $g'(x) >0$ for all $x \in (0, 1)$. Moreover,
$$\lim_{x\downarrow 0} g'(x)=0
 \text{ and } 
\lim_{x\uparrow 1} g'(x)=0.$$
Hence $g^{-1}\colon(0,1) \rightarrow (0, 1)$ is continuously differentiable and strictly increasing. Observe that with $g^{-1}(1)=1$, for all $0<a\leq 1$ and $0<\eps< a$, we get that 
\begin{equation}\label{eq:diff_g_inv}
\big|g^{-1}(a)- g^{-1}(a-\eps)\big|
\leq \eps^{1/d}.
\end{equation} 
To see this first for $a=1$ note that $g(1) - g(1-\eps)$ is the volume of the intersection $S_\eps$ of $[-1/2, 1/2]^d$ and the complement of the ball $B(0, (1-\eps)\sqrt{d}/2)$. Let $y=\eps \sqrt{d}/2$ be the distance between a corner of $[-1/2, 1/2]^d$ and the ball. At each corner we can fit an open  cube with diagonal $y$ into $S_\eps$, as illustrated in Figure~\ref{fig:corner_graph}. As $[-1/2, 1/2]^d$ has $2^d$ corners  we can bound $g(1) - g(1-\eps)$ from below by $2^d (y/\sqrt{d})^d = \eps^d$. Since $g^{-1}(1) = 1$ and $g^{-1}$ is strictly increasing, we have that $1-\eps\leq g^{-1}(1-\eps^{d})$ and we can conclude that (\ref{eq:diff_g_inv}) holds for $a=1$. For $0<x<1$ an analogous argument gives 
$x-\eps\le  g^{-1}(g(x)-\eps^d)$ and plugging $x=g^{-1}(a)$ implies the result for 
any~$0<a<1$.
\smallskip

We can express the volume of the intersection of a ball of radius $r>0$ and $\T_N$ in terms of $g$ as 
\begin{align*}
    \text{Vol}(B(0,r) \cap \T_N) & = (2N)^d g\big(\tfrac{r}{\sqrt{d}N}\big).
\end{align*}

\begin{figure}[htp]
    \hspace*{\fill}%
    \begin{minipage}[t]{0.45\textwidth}
        \centering
        \captionsetup{width=\linewidth}
        \vspace{0pt}
    \includegraphics[width=0.94\linewidth]{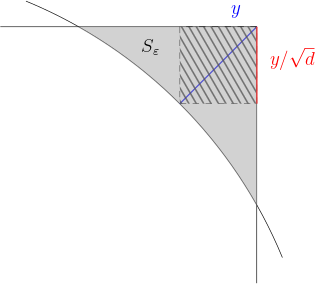}
        \caption{The figure illustrates how an open cube of sidelength $y/\sqrt{d}$ can be fit into $S_{\eps}$.}
        \label{fig:corner_graph}
        \end{minipage}%
        \hfill
    \begin{minipage}[t]{0.45\textwidth}
        \centering
        \captionsetup{width=\linewidth}
        \vspace{0pt}
        \includegraphics[width=.9\linewidth]{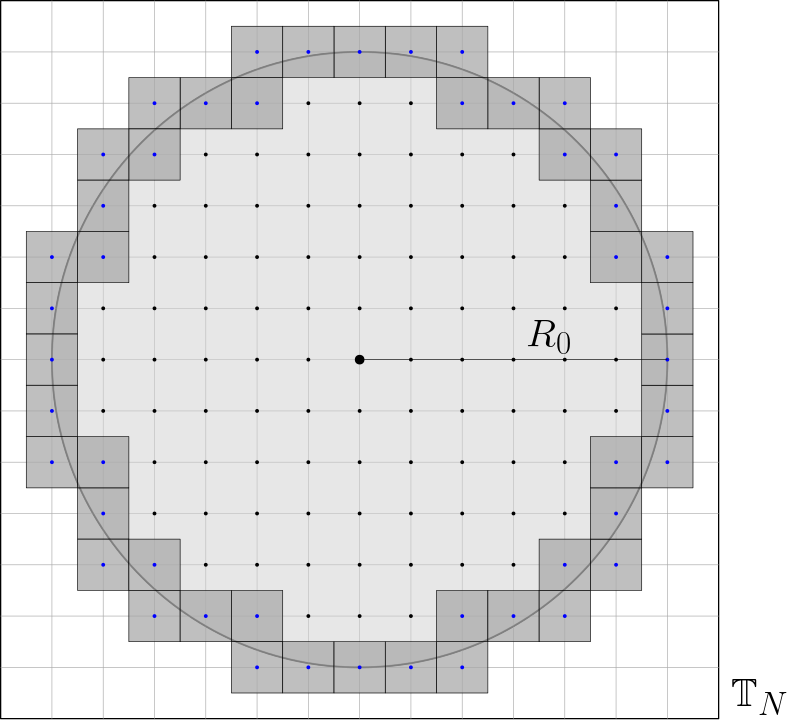}
        \caption{The figure illustrates how $W\hnn$ can be bounded from above by the volume of 
        $B(0,R_0)$ plus the number of unit cubes in $\T_N$ that intersect the boundary of the ball. A lower bound follows by subtracting the number of unit cubes in $\T_N$ intersecting the boundary of the ball from its volume.}
         \label{fig:approx_unit_cubes}
    \end{minipage}%
    \hspace*{\fill}
    \vspace{-3mm}
\end{figure} 

\pagebreak[3]

Note that the boundary of $B(0,r)$ intersects a constant multiple of $N^{d-1}$ unit hypercubes centred in lattice points of~$\T_N$. We bound $W\hnn$ from above by the smallest number of unit hypercubes needed to cover $B(0,R_0)\cap \T_N$. This in turn can be bounded by the volume of $B(0,R_0) \cap \T_N$ summed with the number of unit hypercubes in $\T_N$ intersecting its boundary. A lower bound follows analogously. An illustration appears in Figure~\ref{fig:approx_unit_cubes}. 
Thus it holds that for some constant $c_d>1$,
\begin{align*}
    (2N)^dg\big(\tfrac{R_0}{\sqrt{d}N}\big) - c_d N^{d-1}\leq W\hnn \leq (2N)^dg\big(\tfrac{R_0}{\sqrt{d}N}\big) + c_d  N^{d-1}.
\end{align*}
Recalling that $n = (2N+1)^d$, it follows that for all $\eps>0$ and $\eps\leq a_n < b_n \leq 1$ there exists some constant $c_d'>0$ such that for sufficiently large $n$,
\begin{align*}   \mathbb P(W\hnn > a_n n) & \leq \p{g(\tfrac{R_0}{\sqrt{d} N}) > a_n - c'_dN^{-1}}
     = \p{R_0 > \sqrt{d}N g^{-1}(a_n - c_d'N^{-1})}\\
    & = G\big(\sqrt{d}N g^{-1}(a_n - c_d'N^{-1})\big),
\end{align*}
and similarly, for $1-b_n \gg N^{-1}$, we have 
 $\mathbb P(W\hnn > b_n n) \geq G(\sqrt{d}N g^{-1}(b_n + c_d'N^{-1}))$.
By our assumption on $G$ we have, for all $\eps > 0$ and $\eps\leq c_n< d_n\leq1$ with $d_n-c_n\to0$,
\begin{align}\label{difference}
    G(c_n N) - G(d_n N) & = N^{-\beta}((c_n^{-\beta} - d_n^{-\beta})\ell(c_nN) + d_n^{-\beta}(\ell(c_nN) - \ell(d_nN)))\notag\\
    & =  N^{-\beta} ((c_n^{-\beta} - d_n^{-\beta})(1+o^*(1))\ell(N) + o^*(1)\ell(N)d_n^{-\beta -1}(c_n - d_n)\notag\\
    & = (1+o^*(1)) N^{-\beta} \ell(N) \,  (c_n^{-\beta}-d_n^{-\beta}).
\end{align}

\noindent
For $1-b_n \geq \eta_n \gg N^{-1}$, by (\ref{eq:diff_g_inv}), we can write $g^{-1}(a_n - c_d' N^{-1}) = g^{-1}(a_n) - \delta_n$ and
$g^{-1}(b_n + c_d' N^{-1}) = g^{-1}(b_n) + \delta'_n$
for some $0\leq \delta_n, \delta'_n \leq (c_d'N^{-1})^{1/d}$. We infer by (\ref{difference}) that,  for some constant $c_d''>0$, we have that
\begin{align*}
     & \mathbb P(a_nn < W\hnn \leq b_nn) \leq 
  G\big(\sqrt{d}N g^{-1}(a_n - c_d'N^{-1})\big)-   
  G\big(\sqrt{d}N g^{-1}(b_n + c_d'N^{-1})\big)\\
  & \qquad = (1+o^*(1)) N^{-\beta} \ell(N) \,   {d}^{-\beta/2} \,
   (g^{-1}(a_n - c_d'N^{-1})^{-\beta}-g^{-1}(b_n + c_d'N^{-1})^{-\beta})\\
  & \qquad \leq (1+o^*(1))G(N) \,  
  {d}^{-\beta/2} \, \big(g^{-1}(a_n)^{-\beta}-g^{-1}(b_n)^{-\beta} + c_d''N^{-1/d}\big).
\end{align*}
If $b_n-a_n \geq \eta_n\gg N^{-1/d}$, then 
$$g^{-1}(a_n)^{-\beta}-g^{-1}(b_n)^{-\beta}+ c_d''N^{-1/d}
=(1+o^*(1))\int_{a_n}^{b_n} 
-((g^{-1})^{-\beta})'(x) \, dx.$$
A matching lower bound on $\mathbb P(a_nn < W\hnn \leq b_nn)$ follows analogously. In the case $b_n=1$, we get
\begin{align*}
   \mathbb P(W\hnn & > a_nn)    \leq G\big(\sqrt{d}N g^{-1}(a_n - c_d'N^{-1})\big)\\
    & =(1+o^*(1))  G(N) \big(\sqrt{d} g^{-1}(a_n - c_d'N^{-1})\big)^{-\beta}
     = (1+o^*(1))  G(N) d^{-\beta/2},
\end{align*}
where the second equality holds as $a_n - c'dN^{-1}$ tends to one and $g^{-1}(1) = 1$. In the same manner, we deduce a matching lower bound. Now, define the function $h :(0,1)\rightarrow (0,\infty)$ by
   $h(x) := - d^{-\beta/2} ((g^{-1})^{-\beta})'(x)$, for $x\in(0,1)$.
Note that $h$ is continuous and integrable on intervals bounded from zero. We conclude that $W\hnn$ satisfies Assumption~\ref{eq:distr_assumption} with \smash{$\eta_n \gg N^{-1/d} \sim 2^{1/d}n^{-1/d^2}$}, 
$f(n)=G(N)\sim 2^{-\beta}G(n^{1/d})$
and a measure $\nu$ with density $h$ on $(0,1)$ and an atom of size $d^{-\beta/2}$ at one.
\end{proof}

\section{Proof of Theorem~\ref{thm:main_theorem}}\label{sec:proof_main_theorem}
In Theorem~\ref{thm:main_theorem} we claim that the large value of $S_n$ is achieved by increasing exactly~$k$ of the summands $W_1^{_{(n)}},\ldots,W_n^{_{(n)}}$ to size of order $n$, so that together they give an extra contribution of $\rho n$. We start the proof by Lemma~\ref{lem:sk_approximation}, which is a useful tool that in particular allows us to calculate the probability 
of this principal strategy. The proof is then completed by four claims.  Claim~\ref{claim:new_bound_e121k}  shows that the principal strategy is successful, i.e. it leads to the desired value of $S_n$. We classify possible alternative strategies into three categories:
\begin{itemize}
    \item[--] exactly $k$ summands are of order $n$ but their joint contribution is not $\rho n$,
    \item[--] more than $k$ summands are of order $n$,
    \item[--] fewer than $k$ summands are of order $n$.
\end{itemize}\pagebreak[2]
We show in Claims~\ref{claim:bound_estar21k}--\ref{claim:bound_e4j} that the probability of each of the three above strategies is of strictly smaller order than that of the principal strategy.
\smallskip

Similar methods of proof have been used
to show big jump principles, see for  example \cite{Nagaev79, Janson2011, CD19}, and \cite{RBZ19, CBRZ19} for a different approach. Our proof is modelled on the ideas of  \cite[Theorem 19.34]{Janson2011} for balls-in-boxes models and random trees,
which has the combined advantage of giving a very precise result and including information about the 
geometry of the conditional sample as given in our Corollary~\ref{cor:dominating_event}.
We generalise Janson's technique by removing restrictions] on the discrete nature of the random variables, dealing with variables that may be continuous or discrete, possibly with a complex lattice structure,  and of course by incorporating the {truncation} effect. For $k>1$ the latter leads to significant additional difficulties in the argument as extra randomness has to be controlled coming from different possible distributions of the mass among the $k$~condensates. 
\smallskip

We first argue that $K_\rho<\infty$. This is obvious if $k=1$. Otherwise, because
$\rho>k-1$ there exists $\epsilon>0$ such that if there exists $j$ with $x_j<\epsilon$ then \smash{$\rho-\sum_{i=1}^{k-1} x_i>1$}. Making $\epsilon>0$ smaller, if necessary, we may assume that $\delta=1-\rho+(1-\epsilon)(k-1)$ lies in $(0,1)$. Observing that 
$\nu$ is finite on $[\epsilon,1]$ and $h$ bounded on compact subintervals of $(0,1)$ 
it only remains to show that
$$\int_{[\epsilon,1]} \nu(dx_1) \ldots  \int_{[\epsilon,1]} \nu(dx_{k-1}) 1_{\rho-\sum_{i=1}^{k-1} x_i> 1 -\delta } 
h(\rho-\sum_{i=1}^{k-1} x_i) <\infty.$$
If $x_i>1-\epsilon$ for all $i$, then $\rho-\sum_{i=1}^{k-1} x_i <1 -\delta$. Hence,
\begin{align*}
    \int_{[\epsilon,1]}& \nu(dx_1) \cdots  \int_{[\epsilon,1]} \nu(dx_{k-1}) 1_{\rho-\sum_{i=1}^{k-1} x_i> 1 -\delta } 
h(\rho-\sum_{i=1}^{k-1} x_i) \\
& \leq 
(k-1)  \int_\epsilon^{1-\epsilon} dx_1 \int_{[\epsilon,1]} \nu(dx_{2})\cdots  \int_{[\epsilon,1]} \nu(dx_{k-1}) 1_{\rho-\sum_{i=1}^{k-1} x_i> 1 -\delta } 
h(x_1)
h(\rho-\sum_{i=1}^{k-1} x_i)\\
& \leq (k-1) \Big(\max_{[\epsilon, 1-\epsilon]}h \Big) (\nu[\epsilon,1])^{k-2}
\int^1_{1-\delta}  h(x) \, dx,
\end{align*}
which is finite because $h$ is the density of the finite measure $\nu|_{(1-\delta,1)}$ by assumption.\smallskip

Equation~\eqref{eq:lln_wn} is a well-known result and can be derived from 
a law of large numbers for triangular  schemes, for example \cite[Theorem 3, Chapter IX]{Petrov1975}. The weak law of large numbers states that the random variables $(\frac{1}{n}(W_k^{_{(n)}} - \mu_n) \colon 1\leq k\leq n)$ satisfy $\sum_{k=1}^n \frac{1}{n}(W_k^{_{(n)}} - \mu_n) \to 0$ in probability, if, for all $\eps>0$ and $\tau >0$, we have
\begin{enumerate}[label=(\roman*)]
    \item $ n \mathbb P(W^{(n)} > \eps n + \mu_n) \to 0, $ \label{eq:lln_wn_cond1}\smallskip
    \item $\text{Var}(\frac{1}{n}(W^{(n)} - \mu_n)\I{W^{(n)} < \tau n + \mu_n}) \to 0,\, $\label{eq:lln_wn_cond2}\smallskip
    \item $\mathbb E\big[ \frac{1}{n}(W^{(n)} -\mu_n)\I{W^{(n)} < \tau n + \mu_n}\big] \to 0.$\label{eq:lln_wn_cond3}
\end{enumerate}
To verify these conditions, recall from~\eqref{eq:lp_bound_assumption} that $\prob(W^{(n)}> \eps n +\mu_n)  =  ( 1  +o(1)) \, f(n)\, \nu(\eps,1]$. As $(f(n))_n$ is regularly varying with index $-\alpha<-1$, this implies~\ref{eq:lln_wn_cond1}. To see why \ref{eq:lln_wn_cond2} is satisfied, abbreviate 
\smash{$U^{(n)} = (W^{(n)} - \mu_n){\mathbf 1}[W^{(n)} < \tau n + \mu_n]$}. Since $|U^{(n)}| \leq \tau n + \mu_n$ we get $\mathbb E[|U^{(n)}|^2] \leq \mathbb E[|U^{(n)}|](\tau n + \mu_n)  \leq 2\mu_n (\tau n + \mu_n)$, which implies~\ref{eq:lln_wn_cond2}. Lastly, we have 
\begin{align*}
    \tfrac{1}{n}\mathbb E\big[ U^{(n)} \big] & = \tfrac{1}{n}\mu_n \mathbb P(W^{(n)} \geq \tau n + \mu_n) -  \tfrac{1}{n}\mathbb E\big[W^{(n)}\I{W^{(n)} \geq \tau n + \mu_n}\big].
\end{align*}
The first term on the right-hand side tends to zero since $\mu_n = o(n)$. For the second term, observe that $\mathbb E[W^{(n)}{\mathbf 1}[W^n > \tau n + \mu_n]] \leq n \mathbb P(W^{(n)} \geq \tau n + \mu_n)$ tends to zero by \ref{eq:lln_wn_cond1}, thus condition \ref{eq:lln_wn_cond3} holds.
\smallskip

We now give a local limit theorem for \emph{finite} sums of truncated random variables. Throughout the section, we use the abbrevation $[n]$ to denote the set $\{1,\ldots,n\}$.
\begin{lemma}\label{lem:sk_approximation}
Let $k$ be a fixed integer and suppose $W_1^{_{(n)}}, \ldots, W_k^{_{(n)}}$ are independent, identically distributed and 
satisfy Assumption~\ref{eq:distr_assumption}.
Let $$T_k = T_k(n) := W_1^{_{(n)}}+\cdots +W_k^{_{(n)}}.$$
    For any  $k-1<\rho<k$ and sequences $\sigma_1(n) \uparrow \rho, \sigma_2(n) \downarrow \rho$ with {$\sigma_2(n)-\sigma_1(n) \gg \eta_n$},  we have that
     \begin{align*}
        \p{n\sigma_1(n) \leq T_k \leq n\sigma_2(n)} & = (K_{\rho}+o(1))(\sigma_2(n) - \sigma_1(n)) \, {f(n)}^{k}.
    \end{align*}
\end{lemma}
\begin{proof}[Proof of Lemma~\ref{lem:sk_approximation}]
   Let $\sigma_1(n) \uparrow \rho, \sigma_2(n) \downarrow \rho$ be such that $\sigma_2(n) - \sigma_1(n) \gg \eta_n$. 
   Fix a small $\eps>0$ and let $\Delta_n \downarrow 0$ be a sequence satisfying $\eta_n \ll \Delta_n \ll \sigma_2(n) -\sigma_1(n)$ and $m = (1-\eps)/\Delta_n \in \N$. Fix a partition $P_n = \{t_0,\ldots ,t_m\}$ of $[\eps,1]$ satisfying $|t_{i}-t_{i-1}| = \Delta_n$ for all $i\in  [m]$. We begin by constructing a lower bound of $\p{n\sigma_1(n)\leq T_k \leq n\sigma_2(n)}$.  Note that we have
     \begin{align*}
         & \p{n\sigma_1(n)\leq T_k \leq n\sigma_2(n)}\\
         & \geq \sum_{\substack{(\ell_1,\ldots,\ell_k) \in [m]^k}}\prod_{j=1}^k\p{nt_{\ell_j-1} < W_j\hn \leq nt_{\ell_j}}\I{ \sigma_1(n) \leq \sum_{j\in [k]}t_{\ell_j-1}, \sum_{j\in [k]}t_{\ell_j}\leq \sigma_2(n)  }\\
         & = (1+o(1)) f(n)^{k}\sum_{\substack{(\ell_1,\ldots,\ell_k)\in [m]^k\\ \sum_{j\in [k]}t_{\ell_j-1}\geq \sigma_1(n) \\ \sum_{j\in [k]}t_{\ell_j}\leq \sigma_2(n) }}
         \prod_{j=1}^k \nu(
         t_{\ell_j-1}, t_{\ell_j}] \\
         & \geq (1+o(1))  f(n)^{k}\nu^k \big\{(x_1,\ldots,x_k)\in(\eps,1]^k \colon \sigma_1(n) + k\Delta_n \leq \sum_{j=1}^k x_j \leq  \sigma_2(n) - k\Delta_n \big\},
    \end{align*}
    where the equality holds by  Assumption~\ref{eq:distr_assumption}. To see why the last inequality holds, note that as $P_n$ is a partition of $[\eps,1]$, for any $(x_1,\ldots,x_k)\in (\eps,1]^k$ we can find $(\ell_1,\ldots,\ell_k) \in [m]^k$ such that $x_i \in (t_{\ell_i-1},t_{\ell_i}]$ for each $i\in [k]$. Further, we know that $t_{\ell_i-1} = t_{\ell_i } - \Delta_n$. Thus if $x_1+\cdots +x_k \geq  \sigma_1(n) + k\Delta_n$ then $ \sum_{j\in [k]}t_{\ell_j-1} =  \sum_{j\in [k]}t_{\ell_j} - k\Delta_n \geq  \sum_{j\in [k]}x_j - k\Delta_n \geq \sigma_1(n)$. Similarly if $x_1+\cdots+x_k \leq  \sigma_2(n) - k\Delta_n$ then $\sum_{j\in [k]}t_{\ell_j}\leq \sigma_2(n)$. 

Given $\bx=(x_1,\ldots,x_k)\in(\eps,1]^k$ we let $j(\bx)$
be the number of entries in $\bx$ that are equal to one and let $c:=\nu(\{1\})$. Note that for $j(\bx) = k$, the condition $ \sum_{j=1}^k x_j \leq  \sigma_2(n) - k\Delta_n $ is not satisfied for large $n$. For $j \in \{0,\ldots, k-1\}$, we can write
    \begin{align*}
        & \nu^k 
        \big\{\bx = (x_1,\ldots,x_k)\in (\eps,1]^k \colon j(\bx)=j,\, \sigma_1(n) + k\Delta_n \leq 
        \textstyle \sum_{i=1}^k x_i \leq  \sigma_2(n) - k\Delta_n \big\}\\
         & = c^j {k \choose j} 
         \int_{(\eps,1)} \nu(dx_1)\cdots 
         \int_{(\eps,1)} \nu(dx_{k-j-1}) \int_{ \sigma_1(n) + k\Delta_n - (x_1+\cdots +x_{k-j-1}+j)}^{ \sigma_2(n) - k\Delta_n -(x_1+\cdots +x_{k-j-1}+j)}h\Inb{(\eps,1)}(x)  \, dx. 
      \end{align*}    
    Since 
    $\sigma_2(n) - k\Delta_n$ and $\sigma_1(n) + k\Delta_n$ both converge to $\rho>k-1$ as $n$ tends to infinity,  we have $\sigma_2(n) - \sigma_1(n) - 2k\Delta_n = (1+o(1))(\sigma_2(n) - \sigma_1(n))$ by choice of $\Delta_n$. This is asymptotically 
    equivalent to 
    $$(\sigma_2(n) - \sigma_1(n) )  {k \choose j} c^j\int_{(\eps,1)} \nu(dx_1)\cdots 
         \int_{(\eps,1)} \nu(dx_{k-j-1}) 
\, h\Inb{(\eps,1)}\Big(\rho-j-\sum_{i=1}^{k-j-1}x_i\Big).$$
    By the monotone convergence  theorem, since $h$ is non-negative, the integral converges, as $\eps\downarrow0$, to
    $$\int_{(0,1)} \nu(dx_1)\cdots 
         \int_{(0,1)} \nu(dx_{k-j-1}) 
\, h\Big(\rho-j-\sum_{i=1}^{k-j-1}x_i\Big).$$
Splitting each integration with respect $\nu$ into
an integral over $(0,1)$ plus a contribution of the mass at one we get, for any permutation invariant function $f\colon (0,1]^{k-1}\to (0,\infty)$, that
\begin{align*}
\int \nu(dx_1)\cdots & \int \nu(dx_{k-1}) 
f(x_1,\ldots, x_{k-1}) \\
& = \sum_{j=0}^{k-1} \binom{k-1}{j} c^j 
\int_{(0,1)} \nu(dx_1)\cdots \int_{(0,1)} 
\nu(dx_{k-1-j}) f(x_1,\ldots, x_{k-1-j}, \underbrace{1, \ldots, 1}_{j \text{ times}}).
\end{align*}
\vspace{-6mm}

\noindent
Applying this to $f(x_1,\ldots, x_{k-1})=\frac{k}{k-\sum_{i=1}^{k-1}\I{x_i=1}}
h\big(\rho-\sum_{i=1}^{k-1}x_i\big)$ gives
\begin{align*}
\sum_{j=0}^{k-1}   \binom{k}{j} c^j \int_{(0,1)} \nu(dx_1)\cdots &
         \int_{(0,1)} \nu(dx_{k-j-1}) 
\, h\Big(\rho-j-\sum_{i=1}^{k-j-1}x_i\Big) \\
& =  \int \nu(dx_1)\cdots 
         \int \nu(dx_{k-1}) \, \frac{k}{k-\sum_{i=1}^{k-1}\I{x_i=1}} \, 
\, h\Big(\rho-\sum_{i=1}^{k-1}x_i\Big),
\end{align*}
    and the desired lower bound follows.\smallskip

    Next, we construct an upper bound. Let $0<\eps < \rho - (k-1)$ and  observe that for $n$ sufficiently large $n\sigma_1(n)\leq T_k \leq n\sigma_2(n)$ implies
    $W_i\hn \ge \eps n$ for all $i \in [k]$.
    It follows that, for large $n$,
    \begin{align*}
        & \p{n\sigma_1(n)\leq T_k \leq n\sigma_2(n)} \\
        &  = \p{n\sigma_1(n)\leq T_k \leq n\sigma_2(n)  \text{ and }  W_i\hn \geq \eps n\,
        \, \text{for all } i \in [k]} \\
        & \leq \sum_{\substack{(\ell_1,\ldots,\ell_k)\in [m]^k }}\prod_{j=1}^k\p{nt_{\ell_j-1} < W_j\hn \leq nt_{\ell_j}}\I{ \sum_{j\in [k]}t_{\ell_j-1}\leq \sigma_2(n), \sum_{j\in [k]}t_{\ell_j}\geq \sigma_1(n)}\\
        & =  (1+o(1))  f(n)^{k} \sum_{\substack{(\ell_1,\ldots,\ell_k)\in [m]^k\\ \sum_{j\in [k]}t_{\ell_j-1}\leq \sigma_2(n) \\ \sum_{j\in [k]}t_{\ell_j}\geq \sigma_1(n) }}  \prod_{j=1}^k \nu(
         t_{\ell_j-1}, t_{\ell_j}] \\
        & \leq (1+o(1)) f(n)^{k}
         \nu^k \big\{(x_1,\ldots,x_k)\in(\eps,1]^k \colon \sigma_1(n) - k\Delta_n \leq \sum_{j=1}^k x_j \leq  \sigma_2(n) + k\Delta_n \big\}.
    \end{align*}
    The last inequality follows from the fact that $\sum_{j\in[k]}t_{\ell_j} \geq \sigma_1(n)$ implies $ \sigma_1(n) \leq \sum_{j\in[k]}t_{\ell_j} = \sum_{j\in[k]}t_{\ell_j-1} + k\Delta_n \leq \sum_{j\in[k]}x_j + k\Delta_n$. Similarly, if we have $\sum_{j\in [k]}t_{\ell_j-1}\leq \sigma_2(n)$, then $\sum_{j\in[k]}x_j$ $\leq  \sigma_2(n) + k\Delta_n$. 
We bound the measure from above following the same reasoning as in the lower bound, in this case using that $\sigma_2(n) - \sigma_1(n) + 2k\Delta_n = (1+o(1))(\sigma_2(n) - \sigma_1(n))$
and bounding integrals
over $(\eps,1)$ by $(0,1)$, to obtain a matching upper bound.
\qedhere
\end{proof}

With these tools at hand we now focus on the proof of Theorem~\ref{thm:main_theorem}. 
Let $\delta_n \gg \zeta_n \vee \eta_n$ with 
$\zeta_n \downarrow 0$ be a sequence given by~\eqref{eq:lln_wn}. As previously remarked, we can assume that $\delta_n > n^{1-\alpha} \vee n^{-1}$ for all $n\geq 0$. Let  $\rho_1(n)\uparrow\rho$ and $\rho_2(n)\downarrow \rho$ be sequences satisfying $\rho_2(n)-\rho_1(n)\gg \delta_n$. Fix $\eps$ such that 
\begin{align}\label{eq:epsilon_restriction}
    0< \eps < (\alpha -1)(\rho - (k-1))/(2 + k\alpha)
\end{align}
and recall the definition of the interval $I_n = I_n(\rho_1(n),\rho_2(n))$ given by \eqref{def:interval_in}. The choice of $\eps$ will be relevant in the proof of Claim \ref{claim:bound_e4j}. Further, for $m\leq n$ define $T_m = T_m(n) := W_1\hn + \cdots + W_m\hn$. To accommodate for error terms we introduce two sequences $(\rho^{\subset}_1(n))_n$ and  $(\rho^{\subset}_2(n))_n$ converging to $\rho$ satisfying $\rho_1(n) + \delta_n < \rho^{\subset}_1(n) < \rho_1(n) + 2\delta_n$ and $ \rho_2(n) -2 \delta_n < \rho^{\subset}_2(n) < \rho_2(n) - \delta_n$ for all $n\geq 1$. Note that $\rho^{\subset}_2(n) - \rho^{\subset}_1(n) = (1+o(1))(\rho_2(n) - \rho_1(n))$. We decompose $\{ S_n \in I_n\}$ into disjoint events and study their individual probabilities. Define the following events.
\begin{align*}
    \cE_1 :\, & S_n \in I_n, \text{ there exist exactly $k$ indices $i_1,\ldots ,i_k \in [n]$ such that} \\ &  W_{i_1}\hn> \eps  n,\ldots, W_{i_k}\hn > \eps  n, \text{
    and } W_{i_1}\hn + \cdots + W_{i_k}\hn  \in [n\rho^{\subset}_1(n), n\rho^{\subset}_2(n)].\\
    \cE_2 :\,& S_n \in I_n, \text{ there exist exactly $k$ indices $i_1,\ldots,i_k \in [n]$ such that }\\
    & \text{$W_{i_1}\hn> \eps  n$,\ldots, $W_{i_k}\hn > \eps  n$,
    and }  W_{i_1}\hn + \cdots + W_{i_k}\hn  \not\in [n\rho^{\subset}_1(n), n\rho^{\subset}_2(n)].\\
    \cE_3 :\,& S_n \in I_n, \text{ $W_i\hn > \eps  n$ for at least $k+1$ indices $i \in [n]$.}\\
    \cE_4^{j} :\,& S_n \in I_n, \text{  there are exactly $j$ indices $i_1,\ldots,i_j \in [n]$ such that }\\
    & W_{i_1}\hn> \eps  n,\ldots,W_{i_j}\hn> \eps  n.
\end{align*}
We can then rewrite $\{S_n \in I_n\}$ as a union of disjoint events,
\begin{align}\label{eq:sn_disjoint_events}
    \{S_n \in I_n\} = \cE_1 \cup \cE_2 \cup \cE_3 \cup \bigcup_{j=0}^{k-1} \cE_4^j.
\end{align}
The proof consists in showing that $\cE_1$ is the dominating event and that the probabilities of the events $\cE_2, \cE_3, \cE_4^j$ for $0\leq j \leq k-1$ are of smaller order than $\p{\cE_1}$.  Define the following events for $\{i_1,\ldots,i_{k+1}\}\subset [n]$,
\begin{align*}
    \cE_{1;i_1\ldots i_k}:\, &  S_n   \in I_n,\,  W_{i_1}\hn + \cdots + W_{i_k}\hn  \in [n\rho^{\subset}_1(n), n\rho^{\subset}_2(n)]\ \text{ and }\\
    & W_{i_1}\hn> \eps  n, \ldots,  W_{i_k}\hn> \eps  n,\, W_j\hn \leq \eps  n \,  \forall  j\notin \{i_1,\ldots,i_k\} ,\\
    \cE_{2;i_1\ldots i_k}:\, & S_n   \in I_n,\, W_{i_1}\hn + \cdots + W_{i_k}\hn  \notin [n\rho^{\subset}_1(n), n\rho^{\subset}_2(n)]\text{ and }\\  &  W_{i_1}\hn> \eps  n, \ldots,  W_{i_k}\hn> \eps  n,\\
    \cE_{3;i_1 \ldots i_ki_{k+1}}:\, & S_n   \in I_n, W_{i_1}\hn>  \eps  n,\ldots, W_{i_{k+1}}\hn>  \eps  n.
\end{align*}
The identities below follow immediately.
\begin{align}%
    \p{\cE_1} & = {n\choose{k}}\p{\cE_{1;12\ldots k}}, \label{eq:e1_equality_e11k}\\
    \label{eq:e2_inequality_d1k}
    \p{\cE_2} & \leq {n\choose{k}}\p{\cE_{2;12\ldots k}},\\
     \label{eq:e3_inequality_estar21k}
    \p{\cE_3} & \leq {n\choose{k+1}}\p{\cE_{3;12\ldots k\,k+1}}.
\end{align}
Further, we claim that the following equalities hold. 
\begin{claim}\label{claim:new_bound_e121k}
    $\p{\cE_{1;12\ldots k}} = (1+o(1))\mathbb P\big(n\rho^{\subset}_1(n) \leq T_k \leq  n\rho^{\subset}_2(n)\big).$
\end{claim}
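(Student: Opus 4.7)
The plan is to exploit independence of the blocks $(W_1\hn,\ldots,W_k\hn)$ and $(W_{k+1}\hn,\ldots,W_n\hn)$ and show that, conditional on $T_k$ landing in $[n\rho^{\subset}_1(n), n\rho^{\subset}_2(n)]$, the remaining constraints defining $\cE_{112\ldots k}$ are automatically satisfied with probability tending to one, uniformly in the conditioning value. The buffer of width at least $\delta_n$ between $\rho_i(n)$ and $\rho^{\subset}_i(n)$ is precisely what enables this.

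First I would observe that on $\{T_k \in [n\rho^{\subset}_1(n), n\rho^{\subset}_2(n)]\}$ the condition ``$W_i\hn > \eps n$ for every $i \in [k]$'' comes for free: since $W_j\hn \leq n$ always, we have $W_i\hn \geq T_k - (k-1)n \geq n(\rho^{\subset}_1(n) - (k-1))$ for each $i \in [k]$, and this exceeds $\eps n$ for $n$ large because $\rho^{\subset}_1(n) \to \rho$ and $\rho - (k-1) > \eps$ by our choice of $\eps$. Setting $U_{n,k} := W_{k+1}\hn + \cdots + W_n\hn$ and using independence of the two blocks,
$$\p{\cE_{112\ldots k}} = \int_{n\rho^{\subset}_1(n)}^{n\rho^{\subset}_2(n)} Q_n(t) \, \p{T_k \in dt},$$
where
$$Q_n(t) := \p{U_{n,k} \in I_n - t, \ W_j\hn \leq \eps n \ \text{for all}\ j = k+1,\ldots, n}.$$

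The core step is then to show $Q_n(t) = 1 - o(1)$ uniformly for $t \in [n\rho^{\subset}_1(n), n\rho^{\subset}_2(n)]$. The buffer guarantees $I_n - t \supseteq [n\mu - n\delta_n, n\mu + n\delta_n]$: indeed the left endpoint of $I_n - t$ equals $n\mu - (t - n\rho_1(n)) \leq n\mu - n(\rho^{\subset}_1(n) - \rho_1(n)) \leq n\mu - n\delta_n$, and the right endpoint is handled symmetrically. Since $\delta_n \gg \zeta_n$ and, by possibly enlarging $\delta_n$ at the outset, also $|\mu_n - \mu| = o(\delta_n)$, Lemma~\ref{lem:lln_wn} applied to the sum $U_{n,k}$ of $n-k$ i.i.d.\ copies of $W\hn$ yields $\p{U_{n,k} \notin [n\mu - n\delta_n, n\mu + n\delta_n]} = o(1)$. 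Assumption~(\ref{eq:distr_assumption}), integrating $h$ over $[\eps, 1]$, gives $\p{W\hn > \eps n} = O(n^{-\alpha})$, so a union bound yields $\p{\exists\, j > k : W_j\hn > \eps n} = O(n^{1-\alpha}) = o(1)$ since $\alpha > 1$. Combining these two estimates gives $Q_n(t) \geq 1 - o(1)$ uniformly, and $Q_n(t) \leq 1$ trivially. Plugging back into the integral identity,
$$(1-o(1)) \, \p{n\rho^{\subset}_1(n) \leq T_k \leq n\rho^{\subset}_2(n)} \leq \p{\cE_{112\ldots k}} \leq \p{n\rho^{\subset}_1(n) \leq T_k \leq n\rho^{\subset}_2(n)},$$
which is the claim.

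The main obstacle will be verifying the uniform-in-$t$ law of large numbers bound on $U_{n,k}$; this is precisely why the target interval is shrunk from $I_n$ to $[n\rho^{\subset}_1(n), n\rho^{\subset}_2(n)]$, providing a window of width $2n\delta_n$ around $n\mu$ inside every $I_n - t$ into which the concentration of $U_{n,k}$ can fit.
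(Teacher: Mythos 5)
Your argument is essentially the paper's. The upper bound is by containment; for the lower bound you use the $\delta_n$-buffer to show $I_n-t\supseteq[n(\mu-\delta_n),n(\mu+\delta_n)]$ uniformly over $t\in[n\rho^{\subset}_1(n),n\rho^{\subset}_2(n)]$, apply Lemma~\ref{lem:lln_wn} to the remaining $n-k$ summands, and use a union bound to suppress $\{W_j\hn>\eps n\}$ for $j>k$. Conditioning on the value of $T_k$ is a slightly tidier packaging of the same set-containment estimate that the paper writes out directly.

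One small imprecision worth fixing: Assumption~(\ref{eq:distr_assumption}) is a local statement about intervals $[a_nn,b_nn)$ with $b_n\leq1$ shrinking to a point; in particular it gives no control over a possible atom $\p{W\hnn=n}$, which is genuinely of order $n^{-\alpha}$ in Example~(a). So ``integrating $h$ over $[\eps,1]$'' does not quite yield $\p{W\hnn>\eps n}=O(n^{-\alpha})$ from the stated assumptions. The paper instead applies Markov's inequality with the $L^p$-bound from Assumption~(\ref{eq:lp_bound_assumption}), giving $\p{W\hnn>\eps n}\leq Cn^{-p}$ for some $1<p<\alpha$; this is weaker but still makes $n\,\p{W\hnn>\eps n}\to0$, which is all the union-bound step requires. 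Substituting that estimate leaves the rest of your argument unchanged.
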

\begin{claim}\label{claim:bound_estar21k}
    $\p{\cE_{2;12\ldots k}}  = o((\rho_2(n) - \rho_1(n)) f(n)^{k}).$
\end{claim}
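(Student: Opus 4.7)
\medskip
\noindent\textbf{Proof proposal for Claim~\ref{claim:bound_estar21k}.}
The plan is to decompose $\cE^*_{212\ldots k}$ according to whether the tail sum
$U_n := S_n - T_k = W_{k+1}\hn + \cdots + W_n\hn$ obeys the law of large numbers,
and then to exploit that $T_k$ and $U_n$ are independent as they depend on disjoint
blocks of coordinates.

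First I would introduce the good-LLN event
$A := \{|U_n - (n-k)\mu| \leq \widetilde{\zeta}_n n\}$ for some sequence
$\widetilde{\zeta}_n \downarrow 0$ chosen so that
$\eta_n \vee \zeta_n \ll \widetilde{\zeta}_n \ll \delta_n$, which is feasible by the
hypothesis $\delta_n \gg \zeta_n \vee \eta_n$ and gives $\p{A^c} \leq \widetilde{\zeta}_n$
via Lemma~\ref{lem:lln_wn} applied to the $n-k$ iid copies of $W\hn$ forming $U_n$.
On $A \cap \cE^*_{212\ldots k}$, combining $S_n \in I_n$,
$T_k \notin [n\rho^{\subset}_1(n), n\rho^{\subset}_2(n)]$ and
$U_n = (n-k)\mu + O(\widetilde{\zeta}_n n)$ pins $T_k$ to the union of two
boundary strips near $n\rho_1(n)$ and $n\rho_2(n)$, each of width $O(n\delta_n)$
(using $\rho^{\subset}_i(n) - \rho_i(n) = O(\delta_n)$ and $\widetilde{\zeta}_n \ll \delta_n$).
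The endpoints of both strips tend to $\rho$ and their widths dominate $n\eta_n$, so
Lemma~\ref{lem:sk_approximation} applies to each, yielding
$\p{T_k \in \text{strips}} = O(\delta_n \, n^{-\alpha k})$, and therefore
$\p{\cE^*_{212\ldots k} \cap A} = o((\rho_2(n) - \rho_1(n)) n^{-\alpha k})$
since $\delta_n \ll \rho_2(n) - \rho_1(n)$ by construction.

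On the complementary event $A^c$, I would use that $\cE^*_{212\ldots k}$ entails
$W_1\hn, \ldots, W_k\hn > \eps n$, and this $\sigma(W_1\hn,\ldots,W_k\hn)$-event is
independent of $A^c$, which is $\sigma(W_{k+1}\hn, \ldots, W_n\hn)$-measurable. A
Riemann-sum application of Assumption~(\ref{eq:distr_assumption}) over a partition of
$[\eps,1]$ yields $\p{W\hn > \eps n} = O(n^{-\alpha})$, so by independence
$\p{\cE^*_{212\ldots k} \cap A^c} \leq \p{A^c} \, \p{W_1\hn,\ldots,W_k\hn > \eps n}
= O(\widetilde{\zeta}_n \, n^{-\alpha k})$,
which is again $o((\rho_2(n)-\rho_1(n)) n^{-\alpha k})$ as
$\widetilde{\zeta}_n \ll \rho_2(n) - \rho_1(n)$. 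Adding the two bounds closes the claim.

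The main obstacle I anticipate is bookkeeping: one has to verify that both boundary
strips strictly satisfy the hypotheses of Lemma~\ref{lem:sk_approximation}, in particular
that their widths dominate $\eta_n$ while their endpoints converge to $\rho$ from inside
a region of continuity of $h$. This rests on the scale separation
$\eta_n \ll \widetilde{\zeta}_n \ll \delta_n \ll \rho_2(n) - \rho_1(n)$, which is afforded
precisely by the set-up of Theorem~\ref{thm:main_theorem}; the remaining steps are routine
consequences of independence and the polynomial tail estimate implicit in
Assumption~(\ref{eq:distr_assumption}).
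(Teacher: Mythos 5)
Your proposal is correct and uses a genuinely different decomposition than the paper. The paper splits $\cE^*_{212\ldots k}$ according to whether $T_k$ falls in a slightly fattened interval $[n\rho^{\supset}_1(n), n\rho^{\supset}_2(n)]$ (with $\rho^{\supset}_i(n)$ roughly $2\delta_n$-away from $\rho_i(n)$ on the outside): the two boundary strips $[n\rho^{\supset}_1, n\rho^{\subset}_1]$ and $[n\rho^{\subset}_2, n\rho^{\supset}_2]$ are handled directly by Lemma~\ref{lem:sk_approximation}, and the far case is handled by a Riemann partition of $(\eps,1)^k$ together with the observation that $T_{n-k}$ must violate the LLN, with Lemma~\ref{lem:lln_wn} supplying the smallness. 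You instead split on the LLN event $A$ for $T_{n-k}$: on $A$ the constraints $S_n\in I_n$ and $T_k\notin[n\rho^{\subset}_1,n\rho^{\subset}_2]$ deterministically push $T_k$ into two $O(\delta_n)$-width strips, again handled by Lemma~\ref{lem:sk_approximation}; on $A^c$ you avoid the paper's cell-by-cell bookkeeping entirely by bounding $\cE^*_{212\ldots k}\subseteq\{W_1\hn,\ldots,W_k\hn>\eps n\}$ and invoking independence of the two blocks. The two decompositions are essentially dual, but your treatment of the far case is cleaner: a single union bound and tail estimate replaces the paper's partition argument.

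One technical point to keep an eye on: your far-case bound relies on $\p{W\hn>\eps n}=O(n^{-\alpha})$. A Riemann sum over Assumption~(\ref{eq:distr_assumption}) controls $\p{\eps n\leq W\hn<n}$ but not the atom $\p{W\hn=n}$, which the assumptions as stated only force to be $O(n^{-p})$ for $p<\alpha$ via $L^p$-boundedness. For your argument the sharp $n^{-\alpha}$ rate matters, since $\widetilde\zeta_n\cdot n^{-pk}=o((\rho_2-\rho_1)n^{-\alpha k})$ need not hold without it. In practice this is harmless — the paper's own partition argument has the same blind spot (cells of the form $[nt_{\ell-1},nt_{\ell})$ with $t_m=1$ also omit the event $\{W_i\hn=n\}$), and in all the examples $\p{W\hn=n}=O(n^{-\alpha})$ does hold — but it is worth stating explicitly as an assumption or observation rather than leaving it buried in the Riemann-sum step. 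Modulo that shared subtlety, your route is sound and somewhat more economical than the paper's.
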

\begin{claim}\label{claim:bound_d12k} 
    $\p{\cE_{3;12\ldots k\,k+1}}  = o((\rho_2(n) - \rho_1(n)) n^{- 1} f(n)^{k}).$
\end{claim}
\begin{claim}\label{claim:bound_e4j}
    For $0\leq j < k$, we have $\mathbb P(\cE^j_{4}) = o(n^{-1}n^{k}f(n)^{k}).$
\end{claim}
By (\ref{eq:e1_equality_e11k}), Claim~\ref{claim:new_bound_e121k} and Lemma~\ref{lem:sk_approximation} we then have that 
\[\p{\cE_1} =(1+o(1)){n\choose{k}} (\rho^{\subset}_2(n) - \rho^{\subset}_1(n)) f(n)^k K_{\rho}.\]
Combining the identities and claims above, together with (\ref{eq:sn_disjoint_events}), the fact that $\rho^{\subset}_2(n) - \rho^{\subset}_1(n) = (1+o(1))(\rho_2(n) - \rho_1(n))$ and $\rho_2(n) - \rho_1(n) \gg \delta_n > n^{-1}$ gives 
\[\p{S_n \in I_n} = (1+o(1))\p{\cE_1} = ( K_{\rho}+o(1)) {n \choose k} (\rho_2(n)-\rho_1(n)) f(n)^{k}, \] 
proving Theorem~\ref{thm:main_theorem}. Corollary~\ref{cor:dominating_event} follows by choosing 
$\gamma_n = \max\{\rho^{\subset}_2(n) - \rho_1(n), \rho_2(n) - \rho^{\subset}_1(n)\}$.
The remainder of this section focuses on proving the Claims \ref{claim:new_bound_e121k} to \ref{claim:bound_e4j}.
   
\begin{proof}[Proof of Claim~\ref{claim:new_bound_e121k}] The claimed upper bound on $\p{\cE_{1;12\ldots k}}$ follows directly from the definition of the event. To prove a lower bound, remark that 
\begin{align*}
     \{n(\rho_1 & + \mu)  \leq S_n \leq n(\rho_2 + \mu),\, n\rho^{\subset}_1(n)\leq T_k \leq n \rho^{\subset}_2(n) \}\\
     & \supseteq  \{n(\mu - (\rho^{\subset}_1(n) - \rho_1(n)) ) \leq \sum_{i=k+1}^nW_i\hn \leq n( \mu + (\rho_2(n) -\rho^{\subset}_2(n)))\}\\
     & \phantom{gobbldeegobldeegobldeebibbldibumdigok} \cap \{n\rho^{\subset}_1(n)\leq T_k \leq n \rho^{\subset}_2(n) \}\\
     & \supseteq \{|\sum_{i=k+1}^nW_i\hn - n\mu | \leq n\delta_n\}\cap  \{n\rho^{\subset}_1(n)\leq T_k \leq n \rho^{\subset}_2(n) \},
\end{align*}
where the last step holds since $\min \{ \rho^{\subset}_1(n) - \rho_1(n), \rho_2(n) -\rho^{\subset}_2(n)\} \geq \delta_n$. The preceding inclusion together with the fact that the $\smash{W^{(n)}_1,\ldots, W^{(n)}_n}$ are i.i.d.\,implies that 
\begin{align*}
    \p{\cE_{1;12\ldots k}} \geq & \p{|T_{n-k} -n\mu| \leq n\delta_n,\,   W_i\hn \leq \eps n\, \forall i \in [n-k]}\\
    & \cdot \p{n\rho^{\subset}_1(n)\leq T_k \leq n \rho^{\subset}_2(n),\,  W_i\hn > \eps n \,\forall i \in [k]}.
\end{align*}
The second probability of the right hand side equals that of $\{n\rho^{\subset}_1(n)\leq T_k \leq n \rho^{\subset}_2(n)\}$ for sufficiently large $n$ as, if $W_i\hn \leq \eps n$ for some $i\in [k]$, then for large $n$, $T_k \leq n(k-1+\eps)< n\rho^{\subset}_1(n)$. Further, 
\begin{align*}
     \prob(|T_{n-k} -n\mu| & \leq n\delta_n,\,   W_i\hn \leq \eps n\, \forall i\in [n-k]) \\
    & \geq \p{|T_{n-k} - n\mu|\leq n\delta_n} -  (n-k)\mathbb P(W\hnn > \eps n).
\end{align*}
 By \eqref{eq:lln_wn}, $\p{|T_{n-k} - n\mu|\leq n\delta_n} \geq 1 -\delta_n$. 
From \eqref{eq:lp_bound_assumption} we obtain that $(n-k)\mathbb P(W\hn > \eps n)$ converges to zero. The lower bound on $\mathbb P(\cE_{1;12 \ldots k})$ then directly follows from the above.
\end{proof}

\begin{proof}[{Proof of Claim~\ref{claim:bound_estar21k}}]
    Let $\smash{(\rho^{\supset}_1(n))_n}$ and $\smash{(\rho^{\supset}_2(n))_n}$ be two sequences converging to $\rho$ satisfying $ \rho_1(n) - 2\delta_n < \rho^{\supset}_1 < \rho_1(n) - \delta_n$ and $\rho_2(n) + \delta_n < \rho^{\supset}_2 < \rho_2(n) + 2\delta_n$. By construction $|\rho^{\subset}_1(n) - \rho^{\supset}_1(n)|, |\rho^{\supset}_2(n) - \rho^{\subset}_2(n)| \in [2\delta_n,4\delta_n]$ for all $n\geq 1$.  
    Lemma~\ref{lem:sk_approximation} gives that 
    \begin{align*}
        \p{n\rho^{\supset}_1(n) \leq T_k \leq n\rho^{\subset}_1(n)},\p{n\rho^{\subset}_2(n) \leq T_k \leq n\rho^{\supset}_2(n)} =
        O(\delta_n n^{-\alpha k}),
    \end{align*}
    which is 
    $o(( \rho_2(n) - \rho_1(n))n^{-\alpha k})$ and therefore negligible. Thus we can focus on bounding the term 
    $ \mathbb P(\cE_{2;1\ldots k},\, T_k \notin [n\rho^{\supset}_1(n), n\rho^{\supset}_2(n)])$. Let $\eps>0$ and let $\Delta_n \downarrow 0$ be a sequence satisfying $\eta_n \ll \Delta_n \ll \delta_n$ such that $m = (1-\eps)/\Delta_n
    \in\N$. Fix a partition $P_n = \{t_0,\ldots ,t_m\}$ of $[\eps,1]$ satisfying $|t_{i}-t_{i-1}| = \Delta_n$ for all $ i\in [m]$. Observe that 
     \begin{align*}
        & \p{\cE_{2;12\ldots k}, \, T_k \notin [n\rho^{\supset}_1(n), n\rho^{\supset}_2(n)]} \\
        & \leq \sum_{\substack{(\ell_1,\ldots ,\ell_k)\\ \sum_{j\in[k]}t_{\ell_j-1} \leq \rho^{\supset}_1(n)\\
        \text{ or }\sum_{j\in[k]}t_{\ell_j} \geq \rho^{\supset}_2(n)}}\p{S_n \in I_n,\, W_i\hn \in (nt_{\ell_i-1}, nt_{\ell_i}]\, \forall i\in [k]}.
    \end{align*}
    Note that the event $\{S_n \in I_n,\, W_i\hn \in (nt_{\ell_i-1}, nt_{\ell_i}]\,  \forall i \in [k]\}$ is contained in the event $ \{\sum_{i=k+1}^nW\hn_i \in [n(\rho_1 + \mu - \sum_{j\in[k]}t_{\ell_j}), n(\rho_2 + \mu - \sum_{j\in[k]}t_{\ell_j -1})]\}\cap \{W_i\hn \in (nt_{\ell_i -1}, nt_{\ell_i}] \, \forall i\in [k]\}$.  Since $t_{j} \in [\eps , 1]$ for all $j \in \{0,\ldots,m\}$, we can approximate $\mathbb P(W_i\hn \in (nt_{j-1}, nt_{j}])$ using Assumption~\ref{eq:distr_assumption}. As $(W_i\hn \colon i\in [k])$ are i.i.d.\,, we have
    \begin{align*}
         \prob\big(  S_n \in I_n,\, & W_i\hn \in (nt_{\ell_i-1}, nt_{\ell_i}]\,  \forall i\in [k]\big) \\
         & \leq (1+o^*(1)) f(n)^{k}
        \prod_{i=1}^k \nu(t_{\ell_i-1},t_{\ell_i}]  \\  
        & \phantom{XXXX}
        \P\Big(T_{n-k} \in [n(\rho_1 + \mu - \sum_{j\in [k]}t_{\ell_j}), n(\rho_2 + \mu - \sum_{j\in [k]}t_{\ell_j-1})]\Big).
    \end{align*}
    Suppose $\sum_{j\in [k]}t_{\ell_j-1} \leq \rho^{\supset}_1(n)$. As $ \rho_1(n) -\rho^{\supset}_1(n) > \delta_n$, the latter probability can be bounded  above by $\p{T_{n-k} > n(\mu + \delta_n - k\Delta_n)} = \p{T_{n-k} - n\mu >(1+o(1))n\delta_n}$. Similarly if $\sum_{j\in [k]}t_{\ell_j} \geq \rho^{\supset}_2(n)$, we get $\p{T_{n-k} - n\mu < - (1+o(1))n\delta_n}$ as an upper bound. As $\nu(\eps,1]$ is finite 
    we have
    \begin{align*}
    \sum_{\substack{(\ell_1,\ldots ,\ell_k)\\ \sum_{j\in[k]}t_{\ell_j-1} \leq \rho^{\supset}_1(n)\\
        \text{ or }\sum_{j\in[k]}t_{\ell_j} \geq \rho^{\supset}_2(n)}}\prod_{i=1}^k \nu(t_{\ell_i-1},t_{\ell_i}] \leq (\nu(\eps,1])^k.
    \end{align*}
By~\eqref{eq:lln_wn} and using the fact that $\delta_n \ll \rho_2(n) - \rho_1(n)$, we infer that
    \begin{align*}
        \mathbb P\big(\cE_{2;12\ldots k},\, T_k \notin &[n\rho^{\supset}_1(n), n\rho^{\supset}_2(n)]\big) \\
        & \leq  (1+o(1))(\nu(\eps,1])^k f(n)^k \,\p{|T_{n-k} -n\mu| > (1+o(1))n\delta_n}\\
        & = o((\rho_2(n) - \rho_1(n))f(n)^k).\qedhere
    \end{align*}
\end{proof}

\begin{proof}[Proof of Claim~\ref{claim:bound_d12k}]
      Let $\eps>0$ and let $\Delta_n \downarrow 0$ be a sequence satisfying $\Delta_n \gg \eta_n$ such that $m = (1-\eps)/\Delta_n \in \N$. Fix a partition $P_n = \{t_0,....,t_m\}$ of $[\eps,1]$ satisfying $|t_{i}-t_{i-1}| = \Delta_n$ for all $i\in[m]$.  Then by Assumption~\ref{eq:distr_assumption},
     \begin{align*}
         \p{\cE_{3;12\ldots k\,k+1}} & = \sum_{(\ell_1,\ldots ,\ell_{k+1})}\p{S_n \in I_n,\, W_i\hn \in (nt_{\ell_i-1}, nt_{\ell_i}] \,\forall i\in [k+1]}\\
         & \leq  \sum_{(\ell_1,\ldots ,\ell_{k+1})}\p{W_i\hn \in (nt_{\ell_i-1}, nt_{\ell_i}]\,\forall i\in [k+1]}\\
         & = (1+o(1))f(n)^{k+1} \sum_{(\ell_1,...,\ell_{k+1})}\prod_{i=1}^{k+1} \nu(t_{\ell_i-1},t_{\ell_i}]  \\
         & \leq  (1+o(1))(\nu(\eps,1])^{k+1} f(n)^{k+1}.
    \end{align*}
    The claim follows by choosing $\delta_n \gg nf(n)$ and noting that $f(n) \ll (\rho_2(n) - \rho_1(n))n^{-1}$.
\end{proof}

\begin{proof}[Proof of Claim~\ref{claim:bound_e4j}]
    Fix $0\leq j\leq k-1$ and note that 
    \[\p{\cE_4^j} = {n\choose{j}}\p{S_n \in I_n, \, W_i\hn \leq \eps  n\, \forall i\in [n-j],\, W_i\hn > \eps  n \, \forall n-j<i\leq n}.\]
    Since $W\hnn \leq n$, we have that $T_j =W_1^{_{(n)}}+\cdots +W_j^{_{(n)}}\leq jn$ and so
    \begin{align*}
        \p{\cE_4^j} & \leq {n\choose{j}} \p{ W\hnn > \eps  n}^j \p{T_{n-j} \geq n(\rho_1(n) - j + \mu), \,  W_i\hn \leq \eps  n \,\forall i \in [n-j]}.
    \end{align*}
By \eqref{eq:lp_bound_assumption} for  any $1<  p < \alpha$ we can find a constant $C>0$ such that 
    $\mathbb P( W\hnn > \eps  n) \leq  C n^{-p}.$
Thus it suffices to prove that for all $0\leq j\leq k-1$,
    \begin{align}\label{eq:upperbound_order_e4j_tn}
        \p{T_{n-j} \geq n(\rho_1(n) - j + \mu), \,  W_i\hn \leq \eps  n \, \forall i \in [n-j]} = o( n^{-1}n^{k + j(p-1)} f(n)^k).
    \end{align}
   Define the truncated random variable $$\hat{W}\hnn := W\hnn {\bf 1}_{[W\hnn \leq \eps  n]} \text{ and } \hat{T}_k := \sum_{i =1}^k\hat{W}_i\hn,$$ where $(\hat{W}_i\hn \colon i\geq 1)$ are i.i.d. random variables, distributed like $\hat{W}\hnn$. By construction, for any $m>0$ we have
     $\{T_{n-j} \geq m, W_i\hn \leq \eps  n \,\forall i\in [n-j]\}\subseteq \{\hat{T}_{n-j} \geq m\}.$
    By Chebyshev's inequality, for all $s>0$,
     \begin{align*}
        \p{\hat{T}_{n-j} \geq m} & = \p{\sum_{i =1}^{n-j} \hat{W}_i\hn \geq m} \leq e^{-sm}\E{\exp(s\hat{W}^{(n)})}^{n-j}.
    \end{align*}
    Let $s_n := a \log n/n$, for a constant $a>0$ chosen later. Using the Taylor series expansion of $e^x$, we get
    \begin{align*}
      \E{\exp(s_n\hat{W}^{(n)})} 
      \leq 1 + s_n\mu_n + \mathbb E\Big[\sum_{k\geq 2}\frac{(s_n\hat{W}^{(n)})^k}{k!}\Big].
    \end{align*}
    Suppose for now that for $a < (\alpha - 1)/\eps $, we have that as $n\rightarrow \infty$,
    \begin{align}\label{eq:exp_sum_taylor_order_s}
         \mathbb E\Big[\sum_{k\geq 2}\frac{(s_n\hat{W}^{(n)})^k}{k!}\Big] & = o(s_n).
    \end{align}
    From this it follows that 
$\mathbb E\big[\exp(s_n\hat{W}^{(n)}) \big]  \leq  1 + s_n\mu_n + o(s_n) \leq \exp{(s_n(\mu_n + o(1)))}$.
    Putting everything together,
    \begin{align*}
         & \p{T_{n-j} \geq n(\rho_1(n) - j + \mu), \,  W_i\hn \leq \eps  n \,\forall i \in [n-j]} \\
         & \leq  \prob\big( \hat{T}_{n-j} \geq n(\rho_1(n) - j + \mu) \big)\\
         & \leq \exp(- s_nn(\rho_1(n) - j + \mu) + s_n(n-j)(\mu_n + o(1)) ) = \exp(s_nn(- (\rho - j + o(1))),
    \end{align*}
    where we use the fact that $\mu = \mu_n + o(1)$ and $\rho = \rho_1(n) + o(1)$. As $s_n = a\log n / n$, we have that $ \exp(s_nn(- (\rho - j + o(1)))  = o(n^{1-a(\rho - j))})$. Note that since $f$ is a regularly varying function with index $-\alpha$, for all $b>\alpha$ we have that $-\log_n(f(n)) < b$ for sufficiently large~$n$. Let $a = (2+k\alpha)/(\rho-(k-1))$. Note that $a<(\alpha -1 )/\eps$ by our choice of $\eps$ given by (\ref{eq:epsilon_restriction}). The equality (\ref{eq:upperbound_order_e4j_tn}) then follows for all $0\leq j \leq k-1$ since
    \begin{align*}
        1 - a(\rho-j) & \leq -1 + k + j(p-1) + k\log_n(f(n)),
    \end{align*}
    using the fact that $-\log_n(f(n)) < \alpha + 1$ for sufficiently large $n$. It remains to prove (\ref{eq:exp_sum_taylor_order_s}). We can choose constants $c, C>0$ such that $e^{x} \leq 1 + x + Cx^2$ for all $x\in (0, c)$. Then, for sufficiently large $n$, 
    \begin{equation}\label{eq:sum_exp}
         \mathbb E\Big[\sum_{k\geq 2}\frac{(s_n\hat{W}^{(n)})^k}{k!}\Big]  \leq Cs_n^2\mathbb E\Big[(W\hnn )^2\I{W^{(n)}\leq \frac{c}{s_n}}\Big] + \mathbb E\Big[e^{s_nW\hnn }\I{W\hnn \in (\frac{c}{s_n}, \eps  n]}\Big].
    \end{equation}
    As $(W\hnn)_n$ is $L^{2-\beta}$-bounded for $2-\alpha < \beta < 1$, we have
     \begin{align*}
        s_n^2\E{(W\hnn )^2\I{W^{(n)}\leq c/s_n}} & = s_n^2\E{(W\hnn )^{\beta}(W\hnn )^{2-\beta}\I{W\hnn \leq c/s_n}} \\
        &\leq c^{\beta}s_n^{2-\beta}\E{(W\hnn )^{2-\beta}}  = o(s_n),
    \end{align*}
    where the last step holds since $s_n$ converges to $0$ as $n$ increases, thus $s_n^{2-\beta} = o(s_n)$. The second summand of (\ref{eq:sum_exp}) can be bounded, for $1< \alpha' < \alpha$, by
    \begin{align*}
        \E{e^{s_nW\hnn }\I{W\hnn \in (\frac{c}{s_n}, \eps  n]}}   & \leq e^{s_n\eps  n}\p{W\hnn \geq \frac{c}{s_n}} \leq  e^{s_n\eps  n}\frac{s_n^{\alpha'}}{c^{\alpha'}}\E{(W\hnn )^{\alpha'}}.
    \end{align*}
     Plugging in $s_n = (a \log n)/n$, we get that for some constant $C'$, 
    \begin{align*}
        \E{e^{s_nW\hnn }\I{W\hnn \in (\frac{c}{s_n}, \eps  n]}}  & \leq s_n C' n^{a\eps }\left(\frac{a\log n}{n} \right)^{\alpha' - 1} = s_n C'a^{\alpha'-1} n^{a\eps  - \alpha' + 1}(\log n)^{\alpha' - 1}.
    \end{align*}
    For $0<a < (\alpha' - 1)/\eps $, we have $a\eps  - \alpha' + 1 < 0$ and thus $n^{a\eps  - \alpha' + 1}(\log n)^{\alpha' - 1}$ tends to zero as $n$ goes to infinity. It follows that 
    \[ \E{e^{s_nW\hnn }\I{W\hnn \in (\frac{c}{s_n}, \eps  n]}}  = o(s_n).\qedhere\]
\end{proof}

\begin{proof}[Proof of Remark~\ref{tail}]
We sketch the modifications needed to get the result stated in the remark. In place of Lemma~\ref{lem:sk_approximation} we use that
$$\p{n\sigma_1(n) \leq T_k \leq nk}  = (K'_{\rho}+o(1)) \, {f(n)}^{k},$$
which follows from a less subtle approximation. In the events $\mathcal E_1, \ldots, \mathcal E_3$  and $\mathcal E_4^j$ we use $I_n:=I_n(\rho, \infty)$ and here and in the further events we replace $\rho^{\subset}_2(n)$ by $k$. The analogue of Claim~1 then follows by the same arguments replacing $\rho_1$ by $\rho$ and $\rho_2$ by infinity.
The proofs of the analogues of 
Claim~2 and Claim~3 simplify as the small term $\rho_2(n)-\rho_1(n)$ is no longer needed on the right hand side. Finally,
the analogue of  Claim~4 follows by essentially the same arguments as in the proof of Theorem~\ref{thm:main_theorem}.
The combination of the claims yields the statement of Remark~\ref{tail}.
\end{proof}

\section{Outlook}

While it is a very natural model, our random graph is not one of the standard models of geometric random graphs. {It has been chosen for direct applicability of our general result. In an upcoming paper, we show that with some extra effort our approach is also suitable for a wide range of more classical geometric random graph models, for example 
the \emph{scale-free percolation} of~\cite{DHH13}  and the \emph{Boolean} or \emph{continuum percolation model} of~\cite{Hall85}. 
After the first version of our paper was completed,~\cite{SZ22} independently studied large deviations of the edge count in a non-spatial graph model and also observed multiple big jumps.}

In some classical models of geometric random graphs, for example the Boolean model, in which two vertices are connected by an edge if the associated balls intersect, points are randomly placed according to a Poisson process and this may contribute to the excess edges by moving points closer together. While for heavy tailed radius distributions we still believe that the large deviation behaviour follows the paradigm of the minimal number of jumps, such results will be harder to come by. 
If one assumes lighter tails of the radii, the large deviation behaviour of the number of edges becomes even harder to predict. \cite{10.1214/19-AOP1387} investigate the 
extreme case of the Boolean model with fixed deterministic radii. In this case
excess edges necessarily arise from the clumping of the points in the Poisson process. We also plan to investigate situations where both the points and the radii are random but the latter are light tailed.

\addtocontents{toc}{\protect\setcounter{tocdepth}{-1}}

\medskip


\medskip

\paragraph{\bf Acknowledgements}
We 
would like to thank 
Vitali Wachtel and Bert Zwart for alerting us to further interesting references, and  Remco van der Hofstad, Pim van der Hoorn, and Neeladri Maitra for useful discussions and for the permission to include the brief paragraph of the problem of integer values in Remark~\ref{integer} that originated in these discussions. This research was funded by DFG 
project 444092244 ``Condensation in random geometric graphs''
within the priority programme SPP~2265.

\addtocontents{toc}{\protect\setcounter{tocdepth}{2}}


\footnotesize

\bibliographystyle{plainnat}
\bibliography{citation}

\begin{thebibliography}{16}
\providecommand{\natexlab}[1]{#1}
\providecommand{\url}[1]{\texttt{#1}}
\expandafter\ifx\csname urlstyle\endcsname\relax
  \providecommand{\doi}[1]{doi: #1}\else
  \providecommand{\doi}{doi: \begingroup \urlstyle{rm}\Url}\fi

\bibitem[Asmussen and Klüppelberg(1996)]{ASMUSSEN1996103}
Søren Asmussen and Claudia Klüppelberg.
\newblock Large deviations results for subexponential tails, with applications to insurance risk.
\newblock \emph{Stochastic Process. Appl.}, 64\penalty0 (1):\penalty0 103--125, 1996.
\newblock URL \url{https://doi.org/10.1016/S0304-4149(96)00087-7}.

\bibitem[Bingham et~al.(1987)Bingham, Goldie, and Teugels]{bingham_goldie_teugels_1987}
N.~H. Bingham, C.~M. Goldie, and J.~L. Teugels.
\newblock \emph{Regular Variation}.
\newblock Encyclopedia of Mathematics and its Applications. CUP, 1987.
\newblock URL \url{https://doi.org/10.1017/CBO9780511721434}.

\bibitem[Caravenna and Doney(2019)]{CD19}
Francesco Caravenna and Ron Doney.
\newblock {Local large deviations and the strong renewal theorem}.
\newblock \emph{Electron. J. Probab.}, 24:\penalty0 1--48, 2019.
\newblock \doi{10.1214/19-EJP319}.

\bibitem[Chatterjee and Harel(2020)]{10.1214/19-AOP1387}
Sourav Chatterjee and Matan Harel.
\newblock {Localization in random geometric graphs with too many edges}.
\newblock \emph{Ann. Probab.}, 48\penalty0 (2):\penalty0 574--621, 2020.
\newblock \doi{10.1214/19-AOP1387}.

\bibitem[Chen et~al.(2019)Chen, Blanchet, Rhee, and Zwart]{CBRZ19}
Bohan Chen, Jose Blanchet, Chang-Han Rhee, and Bert Zwart.
\newblock Efficient rare-event simulation for multiple jump events in regularly varying random walks and compound {P}oisson processes.
\newblock \emph{Math. Oper. Res.}, 44\penalty0 (3):\penalty0 919--942, 2019.
\newblock URL \url{https://doi.org/10.1287/moor.2018.0950}.

\bibitem[Deijfen et~al.(2013)Deijfen, van~der Hofstad, and Hooghiemstra]{DHH13}
Maria Deijfen, Remco van~der Hofstad, and Gerard Hooghiemstra.
\newblock {Scale-free percolation}.
\newblock \emph{Ann. Inst. H. Poincaré Probab. Statist.}, 49\penalty0 (3):\penalty0 817--838, 2013.
\newblock URL \url{https://doi.org/10.1214/12-AIHP480}.

\bibitem[Denisov et~al.(2008)Denisov, Dieker, and Shneer]{10.2307/25450633}
D.~Denisov, A.~B. Dieker, and V.~Shneer.
\newblock Large deviations for random walks under subexponentiality: The big-jump domain.
\newblock \emph{Ann. Probab.}, 36\penalty0 (5):\penalty0 1946--1991, 2008.
\newblock URL \url{http://www.jstor.org/stable/25450633}.

\bibitem[Foss et~al.(2005)Foss, Konstantopoulos, and Zachary]{foss2005principle}
Sergey Foss, Takis Konstantopoulos, and Stan Zachary.
\newblock The principle of a single big jump: discrete and continuous time modulated random walks with heavy-tailed increments, 2005.
\newblock URL \url{https://arxiv.org/abs/math/0509605}.

\bibitem[Hall(1985)]{Hall85}
Peter Hall.
\newblock {On continuum percolation}.
\newblock \emph{Ann. Probab.}, 13\penalty0 (4):\penalty0 1250--1266, 1985.
\newblock URL \url{https://doi.org/10.1214/aop/1176992809}.

\bibitem[Janson(2011)]{Janson2011}
Svante Janson.
\newblock Simply generated trees, conditioned {Galton-Watson} trees, random allocations and condensation.
\newblock \emph{Probab. Surveys}, 9:\penalty0 103--252, 2011.
\newblock URL \url{https://doi.org/10.1214/11-PS188}.

\bibitem[Nagaev(1979)]{Nagaev79}
S.~V. Nagaev.
\newblock Large deviations of sums of independent random variables.
\newblock \emph{Ann. Probab.}, 7\penalty0 (5):\penalty0 745--789, 1979.
\newblock URL \url{http://www.jstor.org/stable/2243301}.

\bibitem[Nair et~al.(2022)Nair, Wierman, and Zwart]{NWZ2022}
Jayakrishnan Nair, Adam Wierman, and Bert Zwart.
\newblock \emph{The Fundamentals of Heavy Tails: Properties, Emergence, and Estimation}.
\newblock 06 2022.
\newblock \doi{10.1017/9781009053730}.

\bibitem[Petrov(1975)]{Petrov1975}
Valentin Petrov.
\newblock \emph{Sums of Independent Random Variables}.
\newblock Springer Berlin, Heidelberg, 1975.
\newblock URL \url{https://doi.org/10.1007/978-3-642-65809-9}.

\bibitem[Rhee et~al.(2019)Rhee, Blanchet, and Zwart]{RBZ19}
Chang-Han Rhee, Jose Blanchet, and Bert Zwart.
\newblock {Sample path large deviations for Lévy processes and random walks with regularly varying increments}.
\newblock \emph{Ann. Probab.}, 47\penalty0 (6):\penalty0 3551--3605, 2019.
\newblock URL \url{https://doi.org/10.1287/moor.2018.0950}.

\bibitem[Stegehuis and Zwart(2023)]{SZ22}
Clara Stegehuis and Bert Zwart.
\newblock {Scale-free graphs with many edges}.
\newblock \emph{Electron. Commun. Probab.}, 28\penalty0 (none):\penalty0 1--11, 2023.
\newblock URL \url{https://doi.org/10.1214/23-ECP567}.

\bibitem[Vezzani et~al.(2019)Vezzani, Barkai, and Burioni]{PhysRevE.100.012108}
Alessandro Vezzani, Eli Barkai, and Raffaella Burioni.
\newblock Single-big-jump principle in physical modeling.
\newblock \emph{Phys. Rev. E}, 100:\penalty0 012108, Jul 2019.
\newblock URL \url{https://doi.org/10.1103/PhysRevE.100.012108}.

\end{thebibliography}

%
%

\appendix

\end{document}